\documentclass[onecolumn,10pt]{article}
\usepackage[top=.75in, bottom=.75in, left=.75in, right=.75in]{geometry}
\usepackage{amsmath,amsfonts,amscd,amssymb}
\usepackage{bbm}
\usepackage{amsthm}
\usepackage{graphicx}
\usepackage{epstopdf}
\usepackage{overpic}
\usepackage{cancel}
\usepackage{rotating}
\usepackage{url}
\usepackage{caption}
\usepackage{color}
\usepackage{rotating}
\usepackage{multirow}
\usepackage{wrapfig}
\usepackage{mathtools}
\usepackage{subeqnarray}
\usepackage{setspace}
\usepackage{palatino} 
\usepackage[numbers,sort&compress]{natbib}

\usepackage[bottom,flushmargin,hang,multiple]{footmisc}
\usepackage{lipsum}
\newcommand\blfootnote[1]{%
  \begingroup
  \renewcommand\thefootnote{}\footnote{#1}%
  \addtocounter{footnote}{-1}%
  \endgroup
}

\numberwithin{equation}{section}
\numberwithin{table}{section}
\numberwithin{equation}{section}
\newtheorem{theorem}{Theorem}[section]

\newtheorem{corollary}[theorem]{Corollary}
\newtheorem{definition}[theorem]{Definition}
\newtheorem{example}[theorem]{Example}

\newtheorem{remark}[theorem]{Remark}

\definecolor{header1}{cmyk}{0,0,0,1}

\usepackage[utf8]{inputenc}

\usepackage[normalem]{ulem}
\usepackage{color}

\title{\vspace{-.125in}{\huge\selectfont \textbf{Deriving Newton's Canonical Forms of Cubic Curves via the Center of Polynomials}}\vspace{-.075in}}

\author{\normalsize{Hua-Lin Huang$^{1}$, Lili Liao$^{2,*}$}, Yu Ye$^{3, 4}$ and Ziqi Yuan$^{3}$\\
\footnotesize{$^1$ School of Mathematical Sciences, Huaqiao University,  Quanzhou 362021, China}\\
\footnotesize{$^2$ School of Mathematical Sciences, Anhui University, Hefei 230601, China }\\
\footnotesize{$^3$ School of Mathematical Sciences, University of Science and Technology of China, Hefei 230026, China} \\
\footnotesize{$^4$ Hefei National Laboratory, University of Science and Technology of China, Hefei 230026, China \vspace{-.2in}}}
\date{}

\begin{document}
\maketitle

\blfootnote{$^*$ Corresponding author: lililiao@stu.ahu.edu.cn}
\vspace{-.2in}
\begin{abstract}
Existing classifications of real plane cubic curves rely on sophisticated tools and require a lengthy exposition. This paper provides a concise yet elementary treatment of this classical topic. Using the centers of polynomials, we establish a correspondence between the algebraic structure of these centers and geometric properties of binary cubic polynomials, which yields a novel approach to Newton’s canonical forms.
\end{abstract}

\section{Introduction}\label{Sec:Intro}
Since Descartes and Fermat founded analytic geometry, it has been recognized that conic curves are essentially real binary quadratic polynomials, which led to the expectation of using this model to explore higher degree curves. Newton first initiated the systematic study of cubic curves using analytic geometry, and achieved foundational results in his celebrated work \emph{Enumeratio Curvarum Trium Dimensionum}~\cite{N04}. The core of this work lies in simplifying the general binary cubic polynomial with ten coefficients through coordinate changes such as rotation, translation, and scaling, thereby obtaining four canonical forms of cubic curves
\begin{align}  
\label{5.1} xy^{2} + ey &= ax^{3} + bx^{2} + cx + d, \\  
\label{5.2} xy & = ax^{3} + bx^{2} + cx + d, \\  
\label{5.3} y^{2} & = ax^{3} + bx^{2} + cx + d, \\  
\label{5.4} y & =  ax^{3} + bx^{2} + cx + d.  
\end{align}  
Furthermore, Newton claimed that any irreducible cubic curve can be transformed into the form of (1.3). In terms of geometry, all irreducible cubics could be obtained from those in (1.3) by projecting between planes. 

This masterpiece by Newton demonstrated ingenious geometric intuition and exceptional computational skill. For example, in his first attempt Newton handled intermediate equations containing 84 terms by selecting appropriate coordinate systems based on asymptotes and inflection points~\cite{W}, to name a few. Inspired by his work, many renowned mathematicians, such as Cayley~\cite{C} and Plücker~\cite{P}, restudied cubic curves from various perspectives and ended up with the same classification. Klein’s Erlangen Program provided a classification criterion based on group actions, with related work documented in~\cite{BH, K}. Overall, the classification of plane cubic curves is highly non-trivial. Even the most modern treatments still require considerable length~\cite{B, BM, BK} and rely on relatively complex knowledge such as singularities, inflection points, and Bézout's theorem. Therefore, exploring a more concise and comprehensible approach is both necessary and valuable.

This paper aims to derive Newton's canonical forms for cubic curves via a concise yet elementary approach. From an algebraic perspective, the classification of curves and surfaces of degree two, three, or higher amounts to eliminating as many cross terms and redundant terms as possible from a general polynomial by affine changes of variables, so as to obtain a simplified canonical form. The authors observe that the center of polynomials developed in~\cite{FHL} can be an effective tool for addressing this problem. In brief, a multivariate polynomial \( f(x_1, x_2, \dots, x_n)\) can be expressed as the sum of two polynomials in disjoint sets of variables \( g(x_1, \dots, x_a) + h(x_{a+1}, \dots, x_n) \) if and only if its Hessian matrix \( H_f = (\frac{\partial^2 f}{\partial{x_i}\partial{x_j}}) \) is block diagonal. Furthermore, \( f \) can be expressed as a sum of polynomials in disjoint variables through a change of variables if and only if its Hessian matrix is congruent to a block diagonal matrix. Therefore, we defined an invariant algebra up to changes of variables for \( f \), denoted as \[ Z(f) := \{ X \in \mathbbm{R}^{n \times n} \mid (H_f X)^T = H_f X \} \] and called the center of \( f \). This center is used to determine and compute issues such as separation of variables and simplification for \( f \). We find that this invariant algebra is a perfect tool for the purpose of simplifying and classifying binary cubic polynomials. 

More precisely, by considering simultaneously the center \(Z(f)\) of the whole polynomial and that \(Z(f_3)\) of its cubic homogeneous component, we can reduce all binary cubic polynomials into nine simple forms. In particular, Newton's last three canonical forms (1.2)–(1.4) have degenerate cubic components, or equivalently $\dim Z(f_3)=3$, and are entirely determined by \(Z(f)\) and \(Z(f_3)\). A binary cubic polynomial \(f\) can be reduced to Newton's first canonical form (1.1) if and only if $\dim Z(f_3)=2$, namely the cubic component is nondegenerate. This is the most difficult type which contains the general binary cubic polynomials, and can be subdivided into six types according to the algebraic structures of the pair of centers $Z(f)$ and $Z(f_3)$. In other words, we obtain a more refined classification of cubic curves using only linear algebraic techniques. Furthermore, using the fact that Newton's canonical form (1.3) is completely determined by the condition $\mathbbm{R}^2 \cong Z(f) \subset Z(f_3) \cong T$ where $T$ is the algebra of $2 \times 2$ lower-triangular matrices, we provide an alternative proof for Newton's claim that every irreducible cubic curve can be transformed into the form of (1.3). Our method falls within the scope of invariant theory but breaks through the limitations of classical invariant theory, which often considers only invariant polynomials. Instead, at a broader level we consider invariant algebras as a classification criterion, thereby achieving the classification goal with basic algebraic tools and in a more concise manner. This approach is simple and clear, and it holds potential for application to more general curves, hypersurfaces, and even algebraic varieties in the future.

The remainder of this paper is organized as follows: in Section 2 we review the centers of polynomials, and derive the canonical forms of real binary cubics via their centers; in Section 3 we simplify and classify binary cubic polynomials by the algebraic structures of centers, and derive Newton's canonical forms of cubic curves. 
\section{Preliminaries}

This section reviews the center of multivariate polynomials and its application to direct sum decomposition. Using this approach, we revisit the canonical forms of real binary cubics. For convenience, the base field in this paper is restricted to the real numbers \(\mathbbm{R}\). However, the relevant theories, methods, and results can be adapted to arbitrary fields with appropriate modifications.

\subsection{The Center of a Polynomial}

The center of a homogeneous multivariate polynomial arises naturally in the study of many algebraic and geometric problems, see, for example, \cite{CG, H}. To address related issues for non-homogeneous multivariate polynomials, we extended the definition of centers to a more general setting \cite{FHL}.

\begin{definition}
Let \( f \in \mathbbm{R}[x_1, x_2, \dots, x_n] \) be an \( n \)-variate polynomial, and let \( H_f \) be its Hessian matrix. The center of the polynomial \( f \) is defined as  
		\begin{equation}\label{center}
			Z(f) := \bigl\{ X \in \mathbbm{R}^{n \times n} \mid (H_f X)^{T} = H_f X \bigr\}.
	\end{equation}
\end{definition}

Clearly, \( Z(f) \) is a nonzero subspace of \( \mathbbm{R}^{n \times n} \), which contains at least all the scalar matrices.

\begin{example}\label{23}
{\upshape Consider the real ternary cubic polynomial  
\[
\begin{aligned}
f(x_{1},x_{2},x_{3}) = & \; 375x_1^3 - 225x_1^2x_2 - 225x_1^2x_3 + 45x_1x_2^2 +90x_1x_2x_3+ 45x_1x_3^2-3x_2^3-9x_{2}^{2}x_{3}-9x_{2}x_{3}^{2}-3x_3^3\\
                       &+ 50x_1^2 + 6x_2^2 + 12x_2x_3 + 13x_3^2-20x_1x_2 - 20x_1x_3 
                       + 25x_{1} - x_{2} + 4x_{3} + 7.
\end{aligned}
\]

The Hessian matrix of \( f(x_{1},x_{2},x_{3}) \) is computed as  
\[
H_{f} = 2
\begin{pmatrix}
1125x_{1}-225x_{2}-225x_{3}+50 & -225x_{1}+45x_{2}+45x_{3}-10  & -225x_{1}+45x_{2}+45x_{3}-10\\
			-225x_{1}+45x_{2}+45x_{3}-10 & 45x_{1}-9x_{2}-9x_{3}+6 & 45x_{1}-9x_{2}-9x_{3}+6\\
            -225x_{1}+45x_{2}+45x_{3}-10 & 45x_{1}-9x_{2}-9x_{3}+6 & 45x_{1}-9x_{2}-9x_{3}+13
\end{pmatrix}.
\]

Using Equation \eqref{center}, the center of \( f \) is obtained as  
\[
Z(f) = \left\{ 
\begin{pmatrix}
a & b & b+7c \\
0 & a+5b-20c & d \\
0 & 20c & a+5b + 35c-d
\end{pmatrix}
\;\Bigg|\;
a,b,c,d \in \mathbbm{R}
\right\}.
\]}
\end{example}

It can be observed that the set \( Z(f) \) defined above is not closed under matrix multiplication, let alone possessing the structure of a commutative associative algebra which is a critical property of the center of a nondegenerate homogeneous polynomial. In fact, the center of a non‑homogeneous polynomial exhibits a special Jordan algebraic structure. Since the study of Jordan algebra structures is not within the scope of this paper, we will not elaborate on its details here. Interested readers may refer to the relevant literature, such as \cite{J68}.

\subsection{Centers and Direct Sum Decompositions of Polynomials}
We first recall several definitions. A polynomial \(f(x) = f(x_1, x_2, \dots, x_n) \in \mathbbm{R}[x_1, x_2, \dots, x_n]\) is called decomposable as a direct sum if there exists an affine change of variables \(x = Py + v\) such that  
\[
g(y) = g(y_1, y_2, \dots, y_n) := f(Py + v) = g_1(y_1, \dots, y_a) + g_2(y_{a+1}, \dots, y_n),
\]  
where \(P \in \operatorname{GL}(n, \mathbbm{R})\), \(v \in \mathbbm{R}^n\), and \(1 \le a \le n-1\). In particular, if \(g(y) = \sum_{i=1}^n g_i(y_i)\), that is, \(g\) can be expressed as a sum of univariate polynomials, then \(f\) is called diagonalizable.

The following theorem describes the relationship between the center of a general polynomial and its direct sum decompositions. For detailed proofs, we refer the reader to \cite{FHL}.

\begin{theorem}\label{center decompose}
Keep the previous notations.
\begin{itemize}
    \item[{(1)}] \( Z(g) = P^{-1}Z(f)P := \{P^{-1}XP \mid X \in Z(f)\} \).
    \item[{(2)}] There is a one-to-one correspondence between direct sum decompositions of \(f\) and decompositions of the identity matrix as a sum of mutually orthogonal idempotent matrices in its center \(Z(f)\).
    \item[{(3)}] \( f \) is indecomposable if and only if \( Z(f) \) contains no non-trivial idempotents.
    \item[{(4)}] \( f \) is diagonalizable if and only if the identity matrix can be expressed as a sum of \( n \) mutually orthogonal idempotents in its center \( Z(f) \).
\end{itemize}
\end{theorem}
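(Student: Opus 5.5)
We prove the four parts in order. Part (1) is a chain-rule computation, and parts (3) and (4) follow from part (2). The main work is part (2).

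For (1), put $g(y)=f(Py+v)$. The chain rule gives $H_g(y)=P^{T}H_f(Py+v)P$. Take $Y$ with $X=PYP^{-1}$. Then
\[
H_gY=P^{T}H_f(Py+v)\,XP ,
\]
and this matrix is symmetric if and only if $H_f(Py+v)X$ is symmetric. Since $y\mapsto Py+v$ is a bijection of $\mathbbm{R}^n$, symmetry holding for all $y$ is the same as symmetry of $H_fX$ as a polynomial matrix. Hence $Y\in Z(g)$ iff $PYP^{-1}\in Z(f)$, which is $Z(g)=P^{-1}Z(f)P$.

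For (2), first treat a polynomial already in split form, $g=g_1(y_1,\dots,y_a)+g_2(y_{a+1},\dots,y_n)$. Here $H_g=\operatorname{diag}(H_{g_1},H_{g_2})$ is block diagonal. The block idempotents $E_1=\operatorname{diag}(I_a,0)$ and $E_2=\operatorname{diag}(0,I_{n-a})$ satisfy $H_gE_i=\operatorname{diag}(H_{g_1},0)$ or $\operatorname{diag}(0,H_{g_2})$, both symmetric, so $E_i\in Z(g)$. By (1), conjugating back gives the orthogonal idempotents $PE_iP^{-1}\in Z(f)$, summing to $I$. The same works for finer decompositions with more blocks.

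Conversely, let $I=E_1+\dots+E_k$ with $E_i\in Z(f)$ mutually orthogonal idempotents. Choose $P$ whose columns are adapted to $\mathbbm{R}^n=\bigoplus \operatorname{Im}E_i$, so that $P^{-1}E_iP$ are the standard block idempotents. By (1) these lie in $Z(g)$ for $g(y)=f(Py)$. Symmetry of $H_gE_i$ with $E_i=\operatorname{diag}(0,\dots,I,\dots,0)$ forces the off-diagonal blocks of $H_g$ in the $i$-th block column to vanish, because they must equal the corresponding zero blocks of the transpose. So $\partial^2 g/\partial y_p\partial y_q=0$ whenever $y_p,y_q$ lie in different blocks.

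The hardest step is turning this vanishing of mixed second partials into an actual decomposition $g=\sum g_i$. We would do it by integration. Vanishing of $\partial_p\partial_q g$ for $p,q$ in different blocks means each $\partial_p g$ depends only on the variables of $p$'s own block. Then we write $g$ via its Taylor expansion: each monomial involving variables from two different blocks has a nonzero mixed second partial, so no such monomial occurs. The constant term goes into any summand. Afterwards we check that the two constructions are mutually inverse, up to the natural equivalences (reordering and changes of variables inside blocks), which yields the bijection.

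Part (3) is immediate from (2). A nontrivial idempotent $E$ gives $I=E+(I-E)$ with orthogonal idempotents in $Z(f)$, because $Z(f)$ is a linear subspace containing $I$. Part (4) is the case of (2) with $n$ one-dimensional blocks, that is, $n$ orthogonal idempotents of rank one.
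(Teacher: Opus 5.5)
Your proof is correct. The paper gives no proof of this theorem (it defers to \cite{FHL}), but your route is exactly the mechanism its introduction sketches: Hessians transform by congruence under $x=Py+v$, and a block-diagonal Hessian is equivalent to separation of variables, which is what you use. The one point worth stating explicitly is how ``one-to-one'' in (2) should be read: the bijection is with unordered splittings $\mathbbm{R}^n=\bigoplus_i\operatorname{Im}E_i$, i.e.\ decompositions taken modulo reordering and affine changes of variables inside blocks, including the translation $v$, which your converse construction sets to $0$.
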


In what follows, we elucidate the above approach by a concrete example.

\begin{example}
{\upshape (Continuation of Example \ref{23})  
We continue with the polynomial \( f(x_{1},x_{2},x_{3}) \) of Example \ref{23}. We may select from the computed center \( Z(f) \) a basis as follows
\[
\begin{aligned}
X_{1} &= \begin{pmatrix}
1 & 0 & 0\\
0 & 1 & 0\\
0 & 0 & 1
\end{pmatrix}, &
X_{2} &= \begin{pmatrix}
0 & 1 & 1\\
0 & 5 & 0\\
0 & 0 & 5
\end{pmatrix}, &
X_{3} &= \begin{pmatrix}
0 & 0 & 7\\
0 & -20 & 0\\
0 & 20 & 35
\end{pmatrix}, & 
X_{4}&= \begin{pmatrix}
0 & 0 & 0\\
0 & 0 & 1\\
0 & 0 & -1
\end{pmatrix}.
\end{aligned}
\]

Based on the algebraic structure of \( Z(f) \), we find three mutually orthogonal primitive idempotent matrices
\[
\epsilon_{1} = \begin{pmatrix}
1 & -\frac{1}{5} & -\frac{1}{5}\\[2pt]
0 & 0 & 0\\
0 & 0 & 0
\end{pmatrix},\quad
\epsilon_{2} = \begin{pmatrix}
0 & \frac{1}{5} & \frac{1}{5}\\[2pt]
0 & 1 & 1\\
0 & 0 & 0
\end{pmatrix},\quad
\epsilon_{3} = \begin{pmatrix}
0 & 0 & 0\\
0 & 0 & -1\\
0 & 0 & 1
\end{pmatrix}
\]
which add up to the identity matrix.

By direct computation, we obtain an invertible matrix  
\[
P = \begin{pmatrix}
\frac{1}{5} & \frac{1}{5} & 0\\
0 & 1 & -1\\
0 & 0 & 1
\end{pmatrix}
\]
such that
\[
P^{-1}\epsilon_{1}P = \begin{pmatrix}
1 & 0 & 0\\
0 & 0 & 0\\
0 & 0 & 0
\end{pmatrix},\quad
P^{-1}\epsilon_{2}P = \begin{pmatrix}
0 & 0 & 0\\
0 & 1 & 0\\
0 & 0 & 0
\end{pmatrix},\quad
P^{-1}\epsilon_{3}P = \begin{pmatrix}
0 & 0 & 0\\
0 & 0 & 0\\
0 & 0 & 1
\end{pmatrix}.
\]

Taking the following change of variables
\[
x_{1} = \frac{1}{5}(y_{1} + y_{2}),\quad x_{2} = y_{2} - y_{3},\quad x_{3} = y_{3},
\]  
we diagonalize \( f(x_{1},x_{2},x_{3}) \) into
\[
f = 3y_{1}^{3} + 2y_{1}^{2} + 5y_{1}+4y_{2}^{2}+ 4y_{2}+ 7y_{3}^{2} + 5y_{3} + 7.
\]}
\end{example}

\subsection{The Canonical Forms of Real Binary Cubics}

In this subsection, we employ the center to derive the canonical forms of real binary cubic forms.

Without loss of generality, a nonzero real binary cubic form can be written as  
\[
f(x,y) = Ax^{3} + 3Bx^{2}y + 3Cxy^{2} + Dy^{3},
\]  
where \(A,B,C,D\) are not all zero. For the convenience of subsequent applications, we define  
\[
D_{1}=\begin{vmatrix}
B & C \\[2pt]
C & D
\end{vmatrix},\quad
D_{2}=\begin{vmatrix}
C & A \\[2pt]
D & B
\end{vmatrix},\quad
D_{3}=\begin{vmatrix}
A & B \\[2pt]
B & C
\end{vmatrix}.
\]  
Then the Hessian determinant of \(f\) is 
$h_f=36(D_3 x^2 - D_2 xy + D_1 y^2).$
Further, let \(\Delta_{1} = D_2^{2} - 4D_1D_3\), and denote by  
\[
T = \left\{ 
\begin{pmatrix}
s & 0 \\
r & t
\end{pmatrix}
\;\Big|\; r,s,t \in \mathbbm{R} \right\}
\]  
the algebra of \(2 \times 2\) lower-triangular matrices.

Two binary cubic forms \(f\) and \(g\) are said to be equivalent, denoted as \(f \sim g\), if they can be transformed into each other by an invertible linear change of variables. Suppose  
\[
g(x,y) = f(ax+by,\ cx+dy) = A'x^{3} + 3B'x^{2}y + 3C'xy^{2} + D'y^{3}
\]  
and define  
\[
D'_{1}=\begin{vmatrix}
B' & C' \\[2pt]
C' & D'
\end{vmatrix},\quad
D'_{2}=\begin{vmatrix}
C' & A' \\[2pt]
D' & B'
\end{vmatrix},\quad
D'_{3}=\begin{vmatrix}
A' & B' \\[2pt]
B' & C'
\end{vmatrix}.
\]  
Since
\[ h_g(x, y)= (ad-bc)^2 h_f(ax+by, cx+dy),\]
when \(ad-bc\ne 0\), there holds 
\begin{equation}
D_1=D_2=D_3=0 \iff h_f=0 \iff h_g=0 \iff D_1'=D_2'=D_3'=0.\label{invD}
\end{equation}
Moreover, a direct computation yields 
\[\Delta_1'=(ad-bc)^6 \Delta_1.\]
Therefore, \(\Delta_1\) is an invariant of weight \(6\), see e.g. \cite{G, Olver}.

\begin{theorem}\label{cubicform}
    Keep the previous notations. For any nonzero binary cubic form, exactly one of the following holds.
    \begin{itemize}
        \item[(1)] \( f \sim x^{3} \) if and only if \( Z(f) \cong T \), if and only if \( D_1 = D_2 = D_3 = 0 \).
        \item[(2)] \( f \sim x^{3} + y^{3} \sim x^3 + 3xy^{2} \) if and only if \( Z(f) \cong \mathbbm{R}^{2} \), if and only if \( \Delta_{1} > 0 \).
        \item[(3)] \( f \sim x^{2}y \) if and only if \( Z(f) \cong \mathbbm{R}[\epsilon] / \langle \epsilon^{2} \rangle \), if and only if \( \Delta_{1} = 0 \) and \( (D_1,\ D_2,\ D_3)\ne 0\).
        \item[(4)] \( f \sim x^{3} - 3xy^{2} \) if and only if \( Z(f) \cong \mathbbm{C} \), if and only if \( \Delta_{1} < 0 \).
    \end{itemize}
\end{theorem}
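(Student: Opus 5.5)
The plan is to reduce the classification to the factorization of $f$ over $\mathbbm{R}$, and then match each case with a center and with the invariants $D_1,D_2,D_3,\Delta_1$. Since $f$ is a nonzero real binary cubic, it factors into linear forms over $\mathbbm{C}$. Up to an invertible linear change of variables, exactly one of the following holds: $f$ is a cube of a linear form; $f$ has a double root; $f$ has three distinct real roots; or $f$ has one real root and a pair of complex conjugate roots.

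In each case we bring $f$ to a normal form.
\begin{itemize}
\item[(a)] A triple root gives $f\sim x^3$, after absorbing the constant, which is possible since real cube roots exist.
\item[(b)] A double root gives $f\sim x^2y$, by sending the two distinct linear factors to $x$ and $y$ and scaling.
\item[(c)] Three distinct real roots: we write $f=\ell_1\ell_2\ell_3$ and use the fact that $\mathrm{PGL}(2,\mathbbm{R})$ acts triply transitively on $\mathbbm{RP}^1$, so the roots can be sent to those of $x^3-3xy^2=x(x-\sqrt3 y)(x+\sqrt3 y)$; a final scalar fixes the constant. Equivalently, by the classical Hessian argument, $f=\alpha u^3+\beta v^3$ with $u,v$ complex conjugate linear forms, which yields $x^3-3xy^2=\mathrm{Re}(x+iy)^3$.
\item[(d)] One real root and a conjugate pair: the Hessian covariant has two real distinct roots $u,v$, and $f=u^3+v^3$ after scaling. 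The equivalence $x^3+y^3\sim x^3+3xy^2$ is then checked directly, since both have a single real linear factor and one is carried to the other.
\end{itemize}
Exactly one case holds because root multiplicities and reality are invariant under real linear changes of variables.

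Next we compute the center of each normal form, using Theorem \ref{center decompose}(1), which says the center is invariant up to conjugation.
\begin{itemize}
\item For $x^3$, $H_f=\mathrm{diag}(6x,0)$. The condition $(H_fX)^T=H_fX$ only forces $X_{12}=0$, so $Z(f)=T$.
\item For $x^3+y^3$, $H_f=\mathrm{diag}(6x,6y)$, which forces $X$ diagonal, so $Z(f)\cong\mathbbm{R}^2$.
\item For $x^2y$, $H_f=\begin{pmatrix}2y&2x\\2x&0\end{pmatrix}$. Solving the linear conditions gives $Z(f)=\{aI+bN\}$ with $N=\begin{pmatrix}0&1\\0&0\end{pmatrix}$ and $N^2=0$, so $Z(f)\cong\mathbbm{R}[\epsilon]/\langle\epsilon^2\rangle$.
\item For $x^3-3xy^2$, $H_f=6\begin{pmatrix}x&-y\\-y&-x\end{pmatrix}$. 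The solutions are $\{aI+bJ\}$ with $J=\begin{pmatrix}0&-1\\1&0\end{pmatrix}$ and $J^2=-I$, so $Z(f)\cong\mathbbm{C}$.
\end{itemize}
These four algebras are pairwise non-isomorphic: $T$ has dimension 3, $\mathbbm{R}^2$ has nontrivial idempotents, $\mathbbm{R}[\epsilon]/\langle\epsilon^2\rangle$ has a nonzero nilpotent, and $\mathbbm{C}$ is a field. Since the cases are exhaustive, the normal form and the center determine each other.

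Finally we bring in the invariants. By \eqref{invD} and $\Delta_1'=(ad-bc)^6\Delta_1$, the conditions $D_1=D_2=D_3=0$ and the sign of $\Delta_1$ are invariant under linear changes of variables, so it suffices to evaluate them on the normal forms.
\begin{itemize}
\item For $x^3$: $A=1$ and $B=C=D=0$, so all $D_i=0$.
\item For $x^2y$: $B=1/3$ and the other coefficients vanish, so $D_1=D_2=0$ and $D_3=-1/9$. Hence $\Delta_1=0$ while $(D_1,D_2,D_3)\neq0$.
\item For $x^3+y^3$: $A=D=1$ and $B=C=0$, so $D_2=-1$, $D_1=D_3=0$, and $\Delta_1=1>0$.
\item For $x^3-3xy^2$: $A=1$, $C=-1$, $B=D=0$, so $D_1=-1$, $D_2=0$, $D_3=-1$, and $\Delta_1=-4<0$.
\end{itemize}
Since the conditions listed are mutually exclusive and the four cases are exhaustive, every "if and only if" follows.

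The main obstacle is the three-real-roots versus one-real-root split, in particular producing a real transformation in case (d). My first attempt there is the Hessian-covariant argument: the quadratic $D_3x^2-D_2xy+D_1y^2$ has discriminant $\Delta_1$. When $\Delta_1>0$ it factors into two real linear forms $u,v$, and one shows that $f$ lies in the span of $u^3$ and $v^3$. When $\Delta_1<0$ the roots are conjugate and one gets $\mathrm{Re}$ of a cube, which gives $x^3-3xy^2$. This route would also tie the sign of $\Delta_1$ directly to the normal form without computing roots.
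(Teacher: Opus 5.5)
Your outline is correct and takes a genuinely different route from the paper, but the normal form for a cubic with one real root (your case (d)) is asserted rather than proved.

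\textbf{Comparison with the paper.} The paper never factors $f$. It solves \eqref{H3} for a general form and finds $Z(f)=\mathbbm{R}I+\mathbbm{R}\Lambda$ with $\Lambda=\bigl(\begin{smallmatrix}0&D_1\\-D_3&D_2\end{smallmatrix}\bigr)$ when $(D_1,D_2,D_3)\neq0$. It notes that $\chi_\Lambda(x)=x^2-D_2x+D_1D_3$ has discriminant $\Delta_1$, so the sign of $\Delta_1$ fixes $Z(f)\cong\mathbbm{R}[x]/\langle\chi_\Lambda\rangle$ with no root counting. The normal form is then \emph{extracted from the center}. One conjugates $Z(f)$ to the diagonal matrices, to $\mathbbm{R}I+\mathbbm{R}\bigl(\begin{smallmatrix}0&1\\0&0\end{smallmatrix}\bigr)$, or to $\mathbbm{R}I+\mathbbm{R}\bigl(\begin{smallmatrix}0&1\\-1&0\end{smallmatrix}\bigr)$, and reads off from \eqref{H3} which coefficients vanish.

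Your computations of centers and invariants on the four normal forms coincide with the paper's ``$\Rightarrow$'' directions. The difference is that you get the normal forms classically from the real root configuration and use the center only as a conjugation invariant. You then close every equivalence by exclusivity and exhaustiveness. That bookkeeping is clean and makes ``exactly one'' immediate.

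However, it is the traditional approach the paper sets itself against. It never shows how the center \emph{produces} the change of variables, which is the mechanism reused in Section~3 for non-homogeneous polynomials, where no factorization into linear forms exists. The paper's route also explains why $\Delta_1$ is the governing quantity. In yours, the crossed correspondence (three real roots $\leftrightarrow\mathbbm{C}$, one real root $\leftrightarrow\mathbbm{R}^2$) is only observed on normal forms.

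\textbf{The unproved step.} You claim the Hessian covariant of a cubic with one real root has two distinct real roots. That is equivalent to $\Delta_1>0$ for such forms, which your scheme only delivers after the normal form $x^3+y^3$ is established. The claim that ``$f$ lies in the span of $u^3,v^3$'' is also left open.

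No Hessian is needed. Write $f=\ell q$ with $\ell$ real linear and $q$ a definite real quadratic form, and change variables so that $\ell=x$. Then $q=\alpha x^2+\beta xy+\gamma y^2$ with $\beta^2<4\alpha\gamma$.
\begin{itemize}
\item The substitution $y\mapsto y-\frac{\beta}{2\gamma}x$ gives $q=\alpha'x^2+\gamma y^2$ with $\alpha'\gamma>0$.
\item Rescaling $y$ gives $q=\alpha'(x^2+3y^2)$.
\item A real cube root absorbs $\alpha'$, so $f\sim x^3+3xy^2$.
\item Finally, $x\mapsto\frac{x+y}{2}$, $y\mapsto\frac{x-y}{2}$ sends $x^3+y^3$ to $\frac14(x^3+3xy^2)$.
\end{itemize}

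If you prefer the Hessian route, split by the sign of $\Delta_1$ instead of by roots. After moving the real roots of the Hessian to $x$ and $y$ you have $D_1=D_3=0\neq D_2$. Then $B\neq0$ would make $(B,C,D)$ proportional to $(A,B,C)$, forcing $D_2=0$. Hence $B=C=0$ and $f=Ax^3+Dy^3$.

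\textbf{A minor slip.} The center of $x^2y$ is $\mathbbm{R}I+\mathbbm{R}\bigl(\begin{smallmatrix}0&0\\1&0\end{smallmatrix}\bigr)$, not $\mathbbm{R}I+\mathbbm{R}\bigl(\begin{smallmatrix}0&1\\0&0\end{smallmatrix}\bigr)$. For the latter matrix, $H_fX=\bigl(\begin{smallmatrix}0&2y\\0&2x\end{smallmatrix}\bigr)$ is not symmetric. The isomorphism type is unaffected.
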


\begin{proof}
    First, using Theorem \ref{center decompose} and \eqref{invD}, the``\(\Rightarrow\)" in each item of the theorem can be easily verified by computation.

   Conversely, to prove the “\(\Leftarrow\)”, we begin by computing the center of a general real binary cubic form \( f \). Its Hessian matrix is
\[
H_{f} = \begin{pmatrix}
    6Ax + 6By & 6Bx + 6Cy \\
    6Bx + 6Cy & 6Cx + 6Dy
\end{pmatrix}.
\]
By Equation \eqref{center}, the center \( Z(f) \) is the solution space of the linear system
\begin{align} \label{H3}
    \begin{cases}
        Ax_{12} \;+\; B(x_{22} - x_{11}) \;-\; Cx_{21} = 0, \\
        Bx_{12} \;+\; C(x_{22} - x_{11}) \;-\; Dx_{21} = 0.
    \end{cases}
\end{align}
Since \( A, B, C, D \) are not all zero, we have \( \dim Z(f) = 3 \) or \( 2 \).

Clearly, \( \dim Z(f) = 3 \) if and only if \( D_1 = D_2 = D_3 = 0 \). In other words, \( (A, B, C) \) and \( (B, C, D) \) are linearly dependent. Without loss of generality, assume that \( A\ne 0 \) and\( (B, C, D) = \lambda (A, B, C) \) if this is the case. Then we easily obtain \( f = A(x + \lambda y)^3 \), i.e., \( f \sim x^3 \). This proves item (1) of the theorem.

Next, we consider the case \( \dim Z(f) = 2 \), which is equivalent to \( (A, B, C) \) and \( (B, C, D) \) being linearly independent. Using the vector product of analytic geometry, the solution of system \eqref{H3} can be written as
\[
(x_{12},\; x_{22} - x_{11},\; -x_{21}) = b\, (A, B, C) \times (B, C, D)
= b\, (D_1, D_2, D_3), \quad \forall b \in \mathbbm{R}.
\]
In matrix form, this becomes
\[
\begin{pmatrix}
    x_{11} & x_{12} \\
    x_{21} & x_{22}
\end{pmatrix}
= a \begin{pmatrix}
    1 & 0 \\
    0 & 1
\end{pmatrix}
+ b \begin{pmatrix}
    0 & D_1 \\
    -D_3 & D_2
\end{pmatrix}, \quad \forall a, b \in \mathbbm{R}.
\]
Hence, as an algebra, \( Z(f) \) is generated by the matrix
\[
\Lambda = \begin{pmatrix}
    0 & D_1 \\
    -D_3 & D_2
\end{pmatrix}.
\]
The characteristic polynomial of \( \Lambda \) is \( x^{2} - D_{2}x + D_{1}D_{3} \).

First, suppose \( \Delta_{1} = D_{2}^{2} - 4D_{1}D_{3} > 0 \). 
Then \( x^{2} - D_{2}x + D_{1}D_{3} = 0 \) has two distinct real roots \( \alpha_{1} \) and \( \alpha_{2} \), so \( x^{2} - D_{2}x + D_{1}D_{3} = (x - \alpha_{1})(x - \alpha_{2}) \). By the Chinese remainder theorem,
\[
Z(f) \cong \mathbbm{R}[x] / \langle (x - \alpha_{1})(x - \alpha_{2}) \rangle \cong \mathbbm{R}^{2}.
\]
Now there exists an invertible matrix \( P \in \operatorname{GL}(2, \mathbbm{R}) \) such that
\[
P^{-1}\Lambda P = \begin{pmatrix}
    \alpha_1 & 0 \\
    0 & \alpha_2
\end{pmatrix},
\]
and consequently
\[
P^{-1}Z(f)P = \left\{ \begin{pmatrix}
    s & 0 \\
    0 & t
\end{pmatrix} \;\Big|\; s,t \in \mathbbm{R} \right\}.
\]
By Theorem \ref{center decompose}, after a change of variables \( (x, y) \mapsto (x, y) P^{T} \), we may assume that
\[
\begin{pmatrix} 1 & 0 \\ 0 & 0 \end{pmatrix},\;
\begin{pmatrix} 0 & 0 \\ 0 & 1 \end{pmatrix} \in Z(f).
\]
Substituting these into system \eqref{H3} yields \( B = C = 0 \), so \( f = Ax^{3} + Dy^{3} \). Hence \( f(x,y) \sim x^{3} + y^{3} \). Moreover, replacing \( x, y \) with \( \frac{x+y}{2} \) and \( \frac{x-y}{2} \) gives \( x^3 + y^3 \sim x^3 + 3xy^2 \). This proves item (2).

Second, assume \( \Delta_{1} = 0 \).
Then \( x^{2} - D_{2}x + D_{1}D_{3} = 0 \) has a double root \( \alpha \), i.e., \( x^{2} - D_{2}x + D_{1}D_{3} = (x - \alpha)^{2} \). Thus
\[
Z(f) \cong \mathbbm{R}[x] / \langle (x - \alpha)^{2} \rangle \cong \mathbbm{R}[\epsilon] / \langle \epsilon^{2} \rangle.
\]
By a suitable change of variables, we may assume \( \begin{pmatrix} 0 & 1 \\ 0 & 0 \end{pmatrix} \in Z(f) \). Substituting this into the definition of the center gives \( A = B = 0 \), so
\[
f = 3Cxy^{2} + Dy^{3} = (3Cx + Dy)y^{2},
\]
i.e., \( f \sim xy^{2} \). This proves item (3).

Finally, let \( \Delta_{1} < 0 \).  
In this case, the polynomial \( x^{2} - D_{2}x + D_{1}D_{3} = 0 \) has no real roots. Therefore,
\[
Z(f) \cong \mathbbm{R}[x] / \langle x^{2} - D_{2}x + D_{1}D_{3} \rangle \cong \mathbbm{C}.
\]
After an appropriate change of variables, we may assume that
\[
\begin{pmatrix} 0 & 1 \\ -1 & 0 \end{pmatrix} \in Z(f).
\]
Substituting this into system \eqref{H3} yields \( C = -A \) and \( D = -B \). Consequently,
\[
f = Ax^{3} + 3Bx^{2}y - 3Axy^{2} - By^{3}
       = \frac{1}{2}(A + Bi)(x - yi)^{3} + \frac{1}{2}(A - Bi)(x + yi)^{3}.
\]
Choose a complex number \( U + iV \) such that \( (U + iV)^{3} = A + Bi \). Setting \( p = Ux + Vy \) and \( q = Vx - Uy \) transforms the above into
\[
g(p,q) := \frac{1}{2}(p + iq)^{3} + \frac{1}{2}(p - iq)^{3}
        = p^{3} - 3pq^{2}.
\]
Thus \( f \sim x^{3} - 3xy^{2} \), which completes the proof of item (4).

The whole theorem is now proved.
\end{proof}

\begin{remark}\label{Re 1}
The method we employed to derive the canonical forms of real binary cubic forms differs significantly from the traditional approaches \cite{G, Olver}. Moreover, by using the center of polynomials, one can derive the Cardano formula of cubic equations by completing the cube \cite{HRXY25}. The theorem above also shows that the algebraic structure of the center of a binary cubic determines the number of its real roots and multiplicities, and vice versa.
\end{remark}

\section{Deriving Newton's Canonical Forms of Cubic Curves via Centers}

By taking the centers of both binary cubic polynomials and their cubic components into consideration at the same time, we reduce all binary cubic polynomials into nine simple classes. Based on this, we derive Newton's canonical forms of real cubic curves via a purely algebraic yet elementary approach.

\subsection{The Centers of Real Binary Cubic Polynomials}

We first investigate the centers of real binary cubic polynomials. A general nonzero binary cubic polynomial can be written as  
\[
f(x, y) = Ax^{3} + 3Bx^{2}y + 3Cxy^{2} + Dy^{3} + 3Ex^{2} + 6Fxy + 3Gy^{2} + 3Hx + 3Iy + K,
\]  
where \(A,B,C,D,E,F,G,H,I,K \in \mathbbm{R}\). Denote by  
\[
f_3 = Ax^{3} + 3Bx^{2}y + 3Cxy^{2} + Dy^{3}
\]  
the cubic homogeneous component of \(f\). Throughout, to avoid trivial situation, we assume $\frac{\partial f}{\partial x}$ and $\frac{\partial f}{\partial y}$ are linearly independent and \(f_3 \neq 0\). Without loss of generality, we may also assume that \(A, B, C\) are not all zero (interchanging \(x\) and \(y\) if necessary). To facilitate later discussions, we consider both \(Z(f)\) and \(Z(f_3)\) simultaneously.

For convenience, we introduce the notations  

\[
D_{4} = \begin{vmatrix}
B & C \\
F & G
\end{vmatrix}, \quad
D_{5} = \begin{vmatrix}
C & A \\
G & E
\end{vmatrix}, \quad
D_{6} = \begin{vmatrix}
A & B \\
E & F
\end{vmatrix}.
\]  
Furthermore, let  
\[
\Delta_{2} = D_5^2 - 4D_4D_6, \quad \Phi = E D_1 + F D_2 + G D_3.
\]
Similarly, $\Delta_2$ and $\Phi$ are also invariants.

\begin{theorem}\label{theo.4.2}
Keep the aforementioned notations and conventions. Every nonzero binary cubic polynomial \( f \) falls into exactly one of the following nine cases.

\begin{enumerate}
\item[(1)] \( Z(f) = Z(f_3) \cong T \) if and only if \( D_1 = D_2 = D_3 = D_4 = D_5 = D_6 = 0 \).

\item[(2)] \( \mathbbm{R}^{2} \cong Z(f) \subset Z(f_3) \cong T \) if and only if \( D_1 = D_2 = D_3 = 0 \) and \( \Delta_{2} > 0 \).

\item[(3)] \( \mathbbm{R}[\epsilon]/\langle \epsilon^{2} \rangle \cong Z(f) \subset Z(f_3) \cong T \) if and only if \( D_1 = D_2 = D_3 = 0 \), \((D_4,\ D_5,\ D_6) \ne 0\) and \( \Delta_{2} = 0 \).

\item[(4)] \( Z(f) = Z(f_3) \cong \mathbbm{R}^2 \) if and only if \( \Delta_{1} > 0 \) and \( \Phi = 0 \).

\item[(5)] \( \mathbbm{R} \cong Z(f) \subset Z(f_3) \cong \mathbbm{R}^2 \) if and only if \( \Delta_{1} > 0 \) and \( \Phi \neq 0 \).

\item[(6)] \( Z(f) = Z(f_3) \cong \mathbbm{R}[\epsilon]/\langle \epsilon^{2} \rangle \) if and only if \((D_1,\ D_2,\ D_3) \ne 0\), \( \Delta_{1} = 0 \) and \( \Phi = 0 \).

\item[(7)] \( \mathbbm{R} \cong Z(f) \subset Z(f_3) \cong \mathbbm{R}[\epsilon]/\langle \epsilon^{2} \rangle \) if and only if \( \Delta_{1} = 0 \) and \( \Phi \neq 0 \).

\item[(8)] \( Z(f) = Z(f_3) \cong \mathbbm{C} \) if and only if \( \Delta_{1} < 0 \) and \( \Phi = 0 \).

\item[(9)] \( \mathbbm{R} \cong Z(f) \subset Z(f_3) \cong \mathbbm{C} \) if and only if \( \Delta_{1} < 0 \) and \( \Phi \neq 0 \).
\end{enumerate}

\begin{proof}
According to Equation~\eqref{center}, each \( (z_{ij})_{1 \le i,j \le 2} \in Z(f) \) satisfies the following linear equations
\begin{equation}
\begin{cases}\label{5.6}
A z_{12} \;+\; B (z_{22} - z_{11}) \;-\; C z_{21} = 0, \\[2pt]
B z_{12} \;+\; C (z_{22} - z_{11}) \;-\; D z_{21} = 0, \\[2pt]
E z_{12} \;+\; F (z_{22} - z_{11}) \;-\; G z_{21} = 0.
\end{cases}
\end{equation}
Clearly, \(\dim Z(f)\) can be \(3\), \(2\), or \(1\). As \(Z(f_3)\) is the solution space of the first two equations, hence \(Z(f) \subseteq Z(f_3)\).

If \(\dim Z(f) = 3\), then Theorem~\ref{cubicform} implies \(Z(f) = Z(f_3) \cong T\). Moreover, \(\dim Z(f) = \dim Z(f_3) = 3\) holds if and only if  
\(D_1 = D_2 = D_3 = D_4 = D_5 = D_6 = 0\). This is case (1).

If \(\dim Z(f) = 2\) while \(\dim Z(f_3) = 3\), then the nonzero vectors \((A, B, C)\) and \((B, C, D)\) are linearly dependent, but \((E, F, G)\) is linearly independent of the aforementioned two vectors. Following a computation similar to that in Theorem~\ref{cubicform}, the solution of system \eqref{5.6} can be written as
\[
(z_{12},\, z_{22} - z_{11},\, -z_{21}) = b_1 (A, B, C) \times (E, F, G) = b_1 (D_4, D_5, D_6), \quad \forall b_1 \in \mathbbm{R}.
\]
Furthermore, \(\dim Z(f_3) = 3\) implies \(Z(f_3) \cong T\) by Theorem~\ref{cubicform}. Consequently, every matrix in \(Z(f) \subset Z(f_3)\) has real eigenvalues. Analogous to the proof of Theorem~\ref{cubicform}, we obtain
\[
Z(f) \cong \mathbbm{R}^{2} \;\text{ if and only if }\; \Delta_{2} > 0,
\]
and
\[
Z(f) \cong \mathbbm{R}[\epsilon]/\langle \epsilon^{2} \rangle \;\text{ if and only if }\; \Delta_{2} = 0.
\]
These are cases (2) and (3).

If \(\dim Z(f) = \dim Z(f_3) = 2\), then clearly \(\Phi = 0\). Combined with Theorem~\ref{cubicform}, we have cases (4), (6) and (8).

When \(\dim Z(f) = 1\), the three equations in (5.6) are linearly independent, which is equivalent to \(\Phi \neq 0\). It also implies that \( \dim Z(f_3) = 2\). In this situation, \(Z(f)\) consists of scalar matrices only, while \(Z(f_3)\) can be any one of the three two‑dimensional cases described in Theorem~\ref{cubicform}. These are precisely cases (5), (7) and (9).
\end{proof}
\end{theorem}

\subsection{Deriving Newton's Canonical Forms via Centers}

Based on the algebraic structures of the centers of real binary cubic polynomials and their cubic components, now we derive Newton's canonical forms of real cubic curves.

Two polynomials \(f,\ g\in\mathbbm{R}[x,\ y]\) are said to be equivalent, denoted as \(f \sim g\), if there exist \(P\in\operatorname{GL}(2,\ \mathbbm{R}),
v\in \mathbbm{R}^2\) and \(\lambda\in\mathbbm{R}\setminus\{0\}\) such that \(g(u)=\lambda f(Pu+v)\) where $u=(x,\ y)^T$.
\begin{theorem}\label{ND}
Every nonzero real binary cubic polynomial $f$ is equivalent to exactly one of the following nine canonical forms.

\begin{itemize}
    \item[(1)] \( f \sim a x^{3} + b x^{2} + c x - y + d \ \ (a \ne 0) \)  
          if and only if \( Z(f) = Z(f_3) \cong T \).

    \item[(2)] \( f \sim a x^{3} + b x^{2} - y^{2} + c x + d \ \ (a \ne 0) \)  
          if and only if \( \mathbbm{R}^{2} \cong Z(f) \subset Z(f_3) \cong T \).

    \item[(3)] \( f \sim a x^{3} + b x^{2} - x y + c x + d \ \ (a \ne 0) \)  
          if and only if \( \mathbbm{R}[\epsilon] / \langle \epsilon^{2} \rangle \cong Z(f) \subset Z(f_3) \cong T \).

    \item[(4)] \( f \sim -x y^{2} + a x^{3} + c x - e y + d \ \ (a < 0) \)  
          if and only if \( Z(f) = Z(f_3) \cong \mathbbm{R}^{2} \).

    \item[(5)] \( f \sim -x y^{2} + a x^{3} + b x^{2} + c x - e y + d \ \ (a < 0,\; b \ne 0) \)  
          if and only if \( \mathbbm{R} \cong Z(f) \subset Z(f_3) \cong \mathbbm{R}^{2} \).

    \item[(6)] \( f \sim -x y^{2} + c x - e y + d \)  
          if and only if \( Z(f) = Z(f_3) \cong \mathbbm{R}[\epsilon] / \langle \epsilon^{2} \rangle \).

    \item[(7)] \( f \sim -x y^{2} + b x^{2} + c x - e y + d \ \ (b \ne 0) \)  
          if and only if \( \mathbbm{R} \cong Z(f) \subset Z(f_3) \cong \mathbbm{R}[\epsilon] / \langle \epsilon^{2} \rangle \).

    \item[(8)] \( f \sim -x y^{2} + a x^{3} + c x - e y + d \ \ (a > 0) \)  
          if and only if \( Z(f) = Z(f_3) \cong \mathbbm{C} \).

    \item[(9)] \( f \sim -x y^{2} + a x^{3} + b x^{2} + c x - e y + d \ \ (a > 0,\; b \ne 0) \)  
          if and only if \( \mathbbm{R} \cong Z(f) \subset Z(f_3) \cong \mathbbm{C} \).
\end{itemize}
\end{theorem}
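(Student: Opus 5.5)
Since Theorem \ref{theo.4.2} already partitions all nonzero binary cubic polynomials into nine mutually exclusive classes according to the pair $(Z(f),Z(f_3))$, and since by Theorem \ref{center decompose}(1) these isomorphism types (and the inclusion $Z(f)\subseteq Z(f_3)$) are unchanged by an affine change of variables — translations do not alter the Hessian's effect on constant terms, and scaling by $\lambda\neq 0$ does not change $Z$ — it suffices to do two things. First, put each class into the stated canonical form. Second, compute the centers of each canonical form to confirm that it lands in the claimed class. The second step is a direct substitution into \eqref{5.6}. For example, $-xy^2+ax^3+bx^2+cx-ey+d$ has $f_3=ax^3-xy^2$, and $\Phi$ is proportional to $b$. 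This gives both the ``if'' directions and the exclusivity, so the work is the reduction.

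\textbf{Cases (1)--(3).} Here $Z(f_3)\cong T$, so Theorem \ref{cubicform}(1) gives, after a linear change, $f_3=ax^3$. Then $f=ax^3+3Ex^2+6Fxy+3Gy^2+3Hx+3Iy+K$.
\begin{itemize}
\item[(1)] All of $D_4,D_5,D_6$ vanish, which with $(A,B,C)=(a,0,0)$ forces $F=G=0$. The $y$-coefficient $I$ must then be nonzero, since otherwise $f_y=0$ would contradict the linear independence of the partials. A scaling and shear $y\mapsto$ (affine function of $x$) $- y/(3I)$ yields $ax^3+bx^2+cx-y+d$.
\item[(2)] $\Delta_2>0$ reads $F^2>0\cdot G$ up to normalization; more precisely, $Z(f)\cong\mathbbm{R}^2$ lets us additionally diagonalize via idempotents in $Z(f)\subset Z(f_3)$. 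This keeps $f_3=ax^3$ (the idempotents are $E_{11},E_{22}$) and kills $F$, with $G\neq0$. Completing the square in $y$ and rescaling gives $ax^3+bx^2-y^2+cx+d$.
\item[(3)] $Z(f)$ contains a nilpotent, which inside $T$ is conjugate to $E_{21}$-type. This forces $G=0$ and $F\neq 0$. The translation $y\mapsto y-$const and the substitution $y\mapsto y+\alpha x$ absorb $3Iy$ and adjust the $x^2$ term, yielding $ax^3+bx^2-xy+cx+d$.
\end{itemize}

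\textbf{Cases (4)--(9).} Here $Z(f_3)$ is two-dimensional, and by Theorem \ref{cubicform} we choose the linear change so that $f_3$ equals $x^3+y^3$ (rewritten as $ax^3-xy^2$ with $a<0$ via a suitable linear change, since $x^3-3xy^2$-type and $x^3+3xy^2$-type forms both appear), or $-xy^2$, or $ax^3-xy^2$ with $a>0$, in cases $\Delta_1>0,=0,<0$ respectively. Concretely, $-xy^2+ax^3$ factors as $x(ax^2-y^2)$, which has three real linear factors exactly when $a>0$. I must check carefully which sign corresponds to which of $\Delta_1\gtrless0$, and I expect the statement's sign convention to be matched after computing $\Delta_1$ for $ax^3-xy^2$.

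Next, use a translation $(x,y)\mapsto(x+p,y+q)$ to remove quadratic terms. The quadratic part becomes $3Ex^2+6Fxy+3Gy^2+\nabla f_3(p,q)$-type terms, i.e.\ the added quadratic form is the Hessian-linear map $(p,q)\mapsto \frac12(pH_{f_3,x}+qH_{f_3,y})$. For $f_3=ax^3-xy^2$, the translation contributes $3apx^2-2qxy-py^2$. This lets us kill the $xy$ and $y^2$ coefficients always, leaving a residual $bx^2$. The condition $\Phi=0$ is precisely that $(E,F,G)$ lies in the span of $(A,B,C),(B,C,D)$, i.e.\ that all quadratic terms can be removed, so $b=0$ in cases (4), (6), (8) and $b\ne0$ otherwise. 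In case (6) with $f_3=-xy^2$, the contributions $-2qxy-py^2$ cannot reach $x^2$. Hence $\Phi=0$ forces $E=0$, while $\Phi\neq 0$ leaves $bx^2$, matching (6)/(7); there the $x^3$ coefficient is $0$. Finally, rescale by $\lambda$ and diagonal scalings to normalize the coefficient of $xy^2$ to $-1$.

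\textbf{Main obstacle.} I expect the main obstacle to be the bookkeeping in cases (4)--(9): matching the translation image with the span condition $\Phi=0$ uniformly, since $\Phi$ is defined in original coordinates. I would handle this by noting that $\Phi$ is invariant, so it suffices to compute it for the normalized $f_3$, where $\Phi$ is a nonzero multiple of the residual $b$. A second subtle point is getting the sign of $a$ right relative to $\Delta_1$.
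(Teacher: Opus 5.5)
Your proposal is correct, and its outer frame matches the paper's. The ``canonical form $\Rightarrow$ center type'' direction and the exclusivity come from computing the centers of the nine families, using $Z(\lambda f(Pu+v))=P^{-1}Z(f)P$ and the partition of Theorem~\ref{theo.4.2}; the converse is a case-by-case reduction. Cases (1)--(3) and (5), (7), (9) go essentially as in the paper: normalize $f_3$, then translate.

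The real difference is in how you handle $\dim Z(f_3)=2$. In (4), (6), (8) the paper conjugates $Z(f)=Z(f_3)$ to a standard subalgebra and substitutes its basis into \eqref{5.6}. For example, $E_{11},E_{22}\in Z(f)$ gives $B=C=F=0$ at once, and only then does it translate. In (5), (7), (9) the paper just asserts $b\neq 0$ ``due to $Z(f)\cong\mathbbm{R}$''. You instead normalize $f_3$ first and observe that a translation by $(p,q)$ changes $(E,F,G)$ by $p(A,B,C)+q(B,C,D)$. Hence $\Phi=(E,F,G)\cdot\bigl((A,B,C)\times(B,C,D)\bigr)$ is translation invariant and is exactly the obstruction to clearing the quadratic part. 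Concretely, for $f_3=ax^3-xy^2$ one gets $\Phi=-(E+3aG)/9$, and the residual $x^2$-coefficient after killing $xy$ and $y^2$ is $b=3(E+3aG)=-27\Phi$. Your route covers all six cases in one stroke and supplies the computation behind the paper's ``$b\neq 0$'' claims. The paper's route shows the center itself doing the simplification, which is the point of the article. Your deferred sign check also closes: $\Delta_1=-4a/27$ for $ax^3-xy^2$, so $a<0$ if and only if $\Delta_1>0$, as stated.

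Two small slips need fixing.
\begin{itemize}
\item[(3)] Shifting $y$ cannot absorb $3Iy$, because the $y$-dependent part is $3y(2Fx+I)$. Use $x\mapsto x-I/(2F)$ instead; the cubic term then only feeds the $x^2$, $x$ and constant terms.
\item[(2)] With $f_3=ax^3$ one has $D_4=0$, $D_5=-aG$ and $D_6=aF$, so $\Delta_2=a^2G^2$. Case (2) versus case (3) is therefore just $G\neq 0$ versus $G=0\neq F$. In case (2) you can complete the square in $y$ directly, since the shear $y\mapsto y-(F/G)x$ preserves $f_3=ax^3$; the idempotent detour is unnecessary.
\end{itemize}
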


\begin{proof}
    First, by Theorems \ref{center decompose} and \ref{cubicform}, the "$\Rightarrow$" implications for each item of the theorem can be verified by direct computation. We prove only the "$\Leftarrow$" implications.

    \begin{itemize}
        \item[(1)] When \( Z(f) = Z(f_3) \cong T \), there exists an invertible matrix \( P \in \operatorname{GL}(2,\mathbbm{R}) \) such that
              \[
              P^{-1}Z(f)P = P^{-1}Z(f_3)P = T.
              \]
              After a suitable change of variables, we may assume that
              \[
              \begin{pmatrix} 0&0\\0&1 \end{pmatrix},\;
              \begin{pmatrix} 1&0\\0&0 \end{pmatrix},\;
              \begin{pmatrix} 0&0\\1&0 \end{pmatrix} \in Z(f) = Z(f_3).
              \]
              Substituting these into system \eqref{5.6} gives \( B = C = D = F = G = 0 \), hence
              \[
              f = A x^{3} + 3E x^{2} + 3H x + 3I y + K \quad (A \neq 0).
              \]
              Consequently, \( f \sim a x^{3} + b x^{2} + c x - y + d \ \ (a \neq 0) \).

        \item[(2)] When \( \mathbbm{R}^{2} \cong Z(f) \subset Z(f_3) \cong T \), there exists a \( P \in \operatorname{GL}(2,\mathbbm{R})\) such that
              \[
              P^{-1}Z(f_3)P = T, \quad
              P^{-1}Z(f)P = \left\{ \begin{pmatrix} s & 0 \\ 0 & t \end{pmatrix} \;\big|\; s,t \in \mathbbm{R} \right\}.
              \]
              After a suitable change of variables, we may assume that
              \[ 
              \begin{pmatrix} 1&0\\0&0 \end{pmatrix},\; \begin{pmatrix} 0&0\\0&1 \end{pmatrix} \in Z(f) \subset Z(f_3) \ni \begin{pmatrix} 1&0\\0&0 \end{pmatrix},\; \begin{pmatrix} 0&0\\0&1 \end{pmatrix},\; \begin{pmatrix} 0&0\\1&0 \end{pmatrix}.
              \]
              Substituting into \eqref{H3} and \eqref{5.6} yields \( B = C = D = F = 0 \), so
             \begin{align*}
					f&=Ax^{3}+3Ex^{2}+3Gy^{2}+3Hx+3Iy+K \quad (A\ne 0,\ G\ne 0)\\
					&=Ax^{3}+3Ex^{2}+(3Gy+3I)y+3Hx+K.
				\end{align*}
              After the affine transformation \( x = x',\ y = y' - \frac{I}{2G} \), we obtain
              \begin{align*}
              f=Ax'^{3}+3Ex'^{2}+3Gy'^{2}+3Hx'+K-\frac{3I^{2}}{4G} \quad (A\ne 0),
              \end{align*}
              which shows \( f \sim a x^{3} + b x^{2} - y^{2} + c x + d \ \ (a \neq 0) \).

        \item[(3)] When \( \mathbbm{R}[\epsilon]/\langle \epsilon^{2} \rangle \cong Z(f) \subset Z(f_3) \cong T \), there exists a \( P \in \operatorname{GL}(2,\mathbbm{R})\) such that
              \[
              P^{-1}Z(f_3)P = T, \quad
              P^{-1}Z(f)P = \left\{ \begin{pmatrix} s & 0 \\ t & s \end{pmatrix} \;\big|\; s,t \in \mathbbm{R} \right\}.
              \]
              After a suitable change of variables, we may assume that
              \[
               \begin{pmatrix} 0&0\\1&0 \end{pmatrix} \in Z(f) \subset Z(f_3) \ni \begin{pmatrix} 1&0\\0&0 \end{pmatrix},\; \begin{pmatrix} 0&0\\0&1 \end{pmatrix},\; \begin{pmatrix} 0&0\\1&0 \end{pmatrix}.
              \]
              Substituting into \eqref{5.6} gives \( B = C = D = G = 0 \), so
              \begin{align*}
					f&=Ax^{3}+3Ex^{2}+6Fxy+3Hx+3Iy+K \quad (A\ne 0,\ F\ne 0)\\
					&=Ax^{3}+3Ex^{2}+(6Fx+3I)y+3Hx+K.
				\end{align*}
              Applying the affine transformation \( x = x' - \frac{I}{2F},\ y = y' \) yields \(f \sim a x^{3} + b x^{2} - xy + c x + d \ \ (a \neq 0) \).
                          
        \item[(4)] When \( Z(f) = Z(f_3) \cong \mathbbm{R}^{2} \), there exists a \( P \in \operatorname{GL}(2,\mathbbm{R})\) such that
              \[
              P^{-1}Z(f)P = P^{-1}Z(f_3)P = \left\{ \begin{pmatrix} s & 0 \\ 0 & t \end{pmatrix} \;\big|\; s,t \in \mathbbm{R} \right\}.
              \]
              After a suitable change of variables, we may assume that
              \[
              \begin{pmatrix} 1&0\\0&0 \end{pmatrix},\; \begin{pmatrix} 0&0\\0&1 \end{pmatrix} \in Z(f)=Z(f_3).
              \]
              Substituting into \eqref{5.6} gives \( B = C = F = 0 \), hence
             \begin{align*}
							f&=Ax^{3}+Dy^{3}+3Ex^{2}+3Gy^{2}+3Hx+3Iy+K\\
							&=(Ax+3E)x^{2}+(Dy+3G)y^{2}+3Hx+3Iy+K.
						\end{align*}
              Applying the affine transformation \( x = x' - \frac{E}{A},\ y = y' - \frac{G}{D} \) yields
              \[
              f= A x'^{3} + D y'^{3} + H_1 x' + I_1 y' + K_1.
              \]
              By Theorem \ref{cubicform}, \( f_3 \sim -xy^{2} + a x^{3} \ \ (a < 0) \), so
              \(f\sim -xy^{2} + a x^{3} + c x - e y + d \ \ (a < 0) \).

      \item[(5)]   
When $\mathbbm{R} \cong Z(f) \subset Z(f_3) \cong \mathbbm{R}^2$, then by Theorem \ref{cubicform} we have $f_3 \sim x^3 + 3xy^2$. Therefore, the polynomial can be reduced to
\begin{align*}
f &= x^3 + 3xy^2 + E_1x^2 + F_1xy + G_1y^2 + H_1x + I_1y + K_1 \\
  &= x^3 + (3x + G_1)y^2 + E_1x^2 + F_1xy + H_1x + I_1y + K_1.
\end{align*}
Now apply the affine transformation $x = x' - \dfrac{G_1}{3}$ to obtain
\[
f = x'^3 + x'(3y^2 + F_1y) + E_2x'^2 + H_2x' + I_2y + K_2.
\]
From this expression it is clear that, after the further affine transformation $y = y' - \dfrac{F_1}{6}$, we get
\[
f = x'^3 + 3x'y'^2 + E_3x'^2 + H_3x' + I_3y' + K_3.
\]
Thus, we conclude that
\[
f \sim -xy^2 + ax^3 + bx^2 + cx - ey + d \quad (a < 0, \, b \neq 0).
\]
Note that, here $a < 0$ is due to the assumption that $Z(f_3) \cong \mathbbm{R}^2$ and $b \neq 0$ due to $Z(f) \cong \mathbbm{R}$.
       \item[(6)] 
When $Z(f_3) = Z(f) \cong \mathbbm{R}[\epsilon] / \langle \epsilon^{2} \rangle$, there exists an invertible matrix $P \in \operatorname{GL}(2,\mathbbm{R})$ such that
\[
P^{-1}Z(f_3)P = P^{-1}Z(f)P = \left\{ \begin{pmatrix} s & 0 \\ t & s \end{pmatrix} \;\middle|\; s,t \in \mathbbm{R} \right\}.
\]
After a suitable change of variables, we may assume without loss of generality that 
\[
\begin{pmatrix} 0 & 0 \\ 1 & 0 \end{pmatrix} \in Z(f), Z(f_3).
\]
Substituting this into system \eqref{5.6} yields $C = D = G = 0$. Consequently,
\begin{align*}
f(x,y) &= Ax^{3} + 3Bx^{2}y + 3Ex^{2} + 6Fxy + 3Hx + 3Iy + K \nonumber \\
&= (Ax + 3By)x^{2} + 3Ex^{2} + 6Fxy + 3Hx + 3Iy + K. 
\end{align*}
Let \( x' = Ax + 3By \) and \( y' = x \), then
\begin{align*}
f &= x'y'^2 + E_1y'^2 + F_1x'y' + H_1x' + I_1y' + K_1 \\
  &= (x' + E_1)y'^2 + F_1x'y' + H_1x' + I_1y' + K_1.
\end{align*}
Similar to the proof in item (5), after applying the affine transformation $ x' = x''-E_1 ,\ \ y' = y''-\dfrac{F_1}{2} $, we obtain
\[
f \sim -xy^2 + cx - ey + d.
\]
\item[(7)]
When \( \mathbbm{R} \cong Z(f) \subset Z(f_3) \cong \mathbbm{R}[c] / \langle c^2 \rangle \), then by Theorem \ref{cubicform} we have \( f_3 \sim xy^2 \). Consequently, the polynomial \( f \) can be reduced to
\begin{align*}
f &= -xy^2 + E_1x^2 + F_1xy + G_1y^2 + H_1x + I_1y + K_1 \\
  &= (-x + G_1)y^2 + E_1x^2 + F_1xy + H_1x + I_1y + K_1.
\end{align*}
Following a similar approach to the proof of item (5),  by applying the affine transformation $x\ = x' \  + \  G_1$,\quad $y = y' + \dfrac{F_1}{2}$, we obtain
\[
f \sim -xy^2 + bx^2 + cx - ey + d \quad (b \neq 0).
\]
The reason for \( b \neq 0 \) is the same as (5).
        \item[(8)]  
When \( Z(f_3) = Z(f) \cong \mathbbm{C} \), there exists a \( P \in \operatorname{GL}(2, \mathbbm{R}) \) such that
\[
P^{-1}Z(f_3)P = P^{-1}Z(f)P = 
\left\{ \begin{pmatrix} s & t \\ -t & s \end{pmatrix} \;\middle|\; s,t \in \mathbbm{R} \right\}.
\]
After a suitable change of variables, we may assume that
\[
\begin{pmatrix}
0 & 1 \\
-1 & 0
\end{pmatrix}
\in Z(f) = Z(f_3).
\]
Substituting this into the system of equations (3.1) yields \( A = -C ,\  B = -D \) and \( E = -G \). Thus, we have
\[
f = Ax^3 + 3Bx^2y - 3Axy^2 - By^3 + 3Ex^2 + 6Fxy - 3Ey^2 + 3Hx + 3Iy + K.
\]
According to Theorem \ref{cubicform}, after an appropriate transformation, the polynomial \( f \) can be reduced to
\[
f = x'^3 - 3x'y'^2 + E_1x'^2 + F_1x'y' - E_1y'^2 + H_1x' + I_1y' + K_1
\]
\[
= (x' + E_1)x'^2 - (3x' + E_1)y'^2 + F_1x'y' + H_1x' + I_1y' + K_1.
\]
As before, after applying the affine transformation \( x' = x'' - \dfrac{E_1}{3},\ y' = y'' + \dfrac{F_1}{6} \), we obtain
\[
f \sim -xy^2 + ax^3 + cx - ey + d \quad (a > 0).
\]

      \item[(9)] 
When $\mathbbm{R} \cong Z(f) \subset Z(f_3) \cong \mathbbm{C}$, there exists a $P \in \operatorname{GL}(2,\mathbbm{R})$ such that  
$$ P^{-1}Z(f_3)P = \left\{ \begin{pmatrix} s & t \\ -t & s \end{pmatrix} \;\middle|\; s,t \in \mathbbm{R} \right\}.$$  
After a suitable change of variables, we may assume that  
$$\begin{pmatrix} 0 & 1 \\ -1 & 0 \end{pmatrix} \in Z(f_3).$$  
By Theorem \ref{cubicform}, we have $f_3 \sim x^{3}-3xy^{2}$. Hence, the polynomial $f$ can be reduced to  
\begin{align*}
f &= x^{3}-3xy^{2}+E_{1}x^{2}+F_{1}xy+G_{1}y^{2}+H_{1}x+I_{1}y+K_{1}\\
&= x^{3}+(-3x+G_{1})y^{2}+E_{1}x^{2}+F_{1}xy+H_{1}x+I_{1}y+K_{1}.
\end{align*}
It can be readily shown that after applying the affine transformation  
$x = x' + \frac{G_{1}}{3}$,\ $y = y' + \frac{F_{1}}{6}$, we obtain  
$$f \;\sim\; -xy^{2} + ax^{3} + bx^{2} + cx - ey + d \ \ (a > 0,\; b \ne 0).$$  
\end{itemize}

    This completes the proof of the theorem.
\end{proof}

As a direct consequence of Theorem \ref{ND}, we derive Newton’s canonical forms of cubic curves using simple and natural algebraic properties of polynomials. In the following, for the convenience of the exposition, let $C$ denote a cubic curve and let $f_C$ be its defining polynomial, namely $C$ is the solution set of $f_C=0$. When there is no risk of confusion, we do not distinguish curves and their defining polynomials.

\begin{corollary}\label{CANONICAL}
Every cubic curve falls into exactly one of the following four canonical forms.
\begin{itemize}
    \item[(1)] \( C \) is of Newton's canonical form (1.4) if and only if \( Z(f_C) = Z({f_C}_3) \cong T \).
    \item[(2)] \( C \) is of Newton's canonical form (1.3) if and only if \( \mathbbm{R}^{2} \cong Z(f_C) \subset Z({f_C}_3) \cong T \).
    \item[(3)] \( C \) is of Newton's canonical form (1.2) if and only if \( \mathbbm{R}[\epsilon] / \langle \epsilon^{2} \rangle \cong Z(f_C) \subset Z({f_C}_3) \cong T \).
    \item[(4)] \( C \) is of Newton's canonical form (1.1) if and only if \( \dim Z({f_C}_3) = 2 \).
\end{itemize}
\end{corollary}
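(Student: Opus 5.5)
The plan is to read the corollary off Theorem \ref{ND}, matching its nine canonical polynomials against Newton's four forms (1.1)--(1.4). Throughout, a curve is identified with its defining polynomial up to the equivalence $g(u)=\lambda f(Pu+v)$; this is harmless because $f=0$ and $\lambda f=0$ define the same curve. By Theorem \ref{theo.4.2}, every nonzero binary cubic polynomial lies in exactly one of the nine center types. Those types are grouped by $Z(f_3)$: cases (1)--(3) have $Z(f_3)\cong T$, i.e.\ $\dim Z(f_3)=3$, and cases (4)--(9) have $\dim Z(f_3)=2$. These groupings are mutually exclusive, and the center types are invariant under affine changes of variables by Theorem \ref{center decompose}(1). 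So once each of the four groups is matched to one of Newton's forms, the ``exactly one'' assertion follows.

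For items (1)--(3), set each canonical form of Theorem \ref{ND} equal to zero and rearrange.
\begin{itemize}
\item[(1)] $ax^3+bx^2+cx-y+d=0$ becomes $y=ax^3+bx^2+cx+d$, which is (1.4).
\item[(2)] $ax^3+bx^2-y^2+cx+d=0$ becomes $y^2=ax^3+bx^2+cx+d$, which is (1.3).
\item[(3)] $ax^3+bx^2-xy+cx+d=0$ becomes $xy=ax^3+bx^2+cx+d$, which is (1.2).
\end{itemize}
For item (4), each of the six forms in cases (4)--(9) has the shape $-xy^2+ax^3+bx^2+cx-ey+d$, with some coefficients possibly zero. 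Setting it to zero gives $xy^2+ey=ax^3+bx^2+cx+d$, which is (1.1). This settles the ``if'' directions.

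For the converses, a curve written in one of Newton's forms with given coefficients determines a polynomial whose center type can be computed directly from system \eqref{5.6}.
\begin{itemize}
\item For (1.4), $f_3=ax^3$ with $a\neq0$, and $F=G=0$ together with $B=C=D=0$ gives $Z(f)=Z(f_3)\cong T$.
\item For (1.3), we get $F=0$ and $G\ne0$, which gives the diagonal algebra $\mathbbm{R}^2$ inside $T$.
\item For (1.2), we get $G=0$ and $F\ne0$, which gives $\mathbbm{R}[\epsilon]/\langle\epsilon^2\rangle$.
\item For (1.1), $f_3=xy^2-ax^3$ has $\dim Z(f_3)=2$.
\end{itemize}

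The main obstacle is degenerate coefficients. Newton's forms allow parameters such as $a=0$ in (1.3) and (1.2), in which case the polynomial is not cubic or falls outside the stated center type. Conversely, the center conditions force $a\neq0$, as recorded in Theorem \ref{ND}. So the statement must be read with the standing conventions: $f_3\neq0$, the partial derivatives are linearly independent, and each form is genuinely cubic ($a\ne0$ in (1.2)--(1.4), and the $xy^2$ term present in (1.1)). I would state these conventions explicitly and check that, under them, the center type of each of Newton's forms is exactly the one listed. The remaining work is routine substitution into \eqref{5.6} and \eqref{H3}.
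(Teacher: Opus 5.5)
Your proposal is correct and matches the paper, which presents the corollary as a direct consequence of Theorem \ref{ND}. In both, cases (1)--(3) are matched to (1.4), (1.3), (1.2) via $Z(f_C)\subseteq Z({f_C}_3)\cong T$, cases (4)--(9) are merged into (1.1) via $\dim Z({f_C}_3)=2$, and exclusivity comes from Theorem \ref{theo.4.2} together with invariance of the centers. Your explicit checks via \eqref{5.6} for the converses, and your statement of the standing conventions ($f_3\neq 0$, $f_x,f_y$ linearly independent, $a\neq 0$ in (1.2)--(1.4)), simply spell out what the paper leaves to ``direct computation''.
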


\begin{remark}
According to Theorem \ref{cubicform}, \( \dim Z(f_3) = 2 \) if and only if at least one of the quantities \( D_1, D_2, D_3 \) is nonzero. Consequently, the binary cubic polynomials satisfying \( \dim Z(f_3) = 2 \)  are the union of the solution sets of the three inequalities \( D_i \neq 0 \). In the affine space of binary cubic polynomials, those equivalent to Newton's canonical form (1.1) are thus a union of three principal open sets, each of which is evidently nonempty. Hence, they constitute a dense open subset of the affine space. In other words, almost all cubic curves can be reduced to Newton's canonical form (1.1) except a subset of measure zero.
\end{remark}

Finally in this subsection, we elucidate the approach via centers by two concrete examples of real binary cubic polynomials, showing how to transform them into the nine canonical forms described in Theorem \ref{ND} and thereby determine their Newton's canonical forms.

\begin{example}\label{new added label1}
{\upshape Consider the real binary cubic polynomial  
\[
f=8x^{3}+36x^{2}y+54xy^{2}+27y^{3}+8x^{2}+24xy+13y^{2}+6x+5y+7.
\]  

Its Hessian matrix is  
\[
H_{f}= 2\begin{pmatrix}
24x+36y+8 & 36x+54y+12 \\[2pt]
36x+54y+12 & 54x+81y+13
\end{pmatrix}.
\]  
Using Equation~\eqref{center}, the center \(Z(f_3)\) of the cubic homogeneous part is the solution space of   
\[
4z_{12} + 6z_{22} = 6z_{11} + 9z_{21}.
\]  
Hence  
\[
Z(f_{3}) = \left\{ 
\begin{pmatrix}
\frac{6a+4b-9c}{6} & b \\[4pt]
c & a
\end{pmatrix}
\;\Bigg|\; a,b,c \in \mathbbm{R} \right\} \cong T.
\]  
The center \(Z(f)\) of the full polynomial is determined by the system  
\[
\begin{cases}
4z_{12} + 6z_{22} = 6z_{11} + 9z_{21}, \\[2pt]
8z_{12} + 12z_{22} = 12z_{11} + 13z_{21},
\end{cases}
\]  
which yields  
\[
Z(f)= \left\{ 
\begin{pmatrix}
a+\frac{2b}{3} & b \\[4pt]
0 & a
\end{pmatrix}
\;\Bigg|\; a,b \in \mathbbm{R} \right\} \cong \mathbbm{R}^{2}.
\]  

From the algebraic structure of \(Z(f)\) we construct a pair of orthogonal idempotents  
\[
\epsilon_{1} = \begin{pmatrix}
1 & \frac{3}{2} \\[2pt]
0 & 0
\end{pmatrix},\quad
\epsilon_{2} = \begin{pmatrix}
0 & -\frac{3}{2} \\[2pt]
0 & 1
\end{pmatrix}.
\]  
Take an invertible matrix  
\[
P = \begin{pmatrix}
\frac{1}{2} & -\frac{3}{2} \\[2pt]
0 & 1
\end{pmatrix}
\]  
such that  
\[
P^{-1}\epsilon_{1}P = \begin{pmatrix} 1 & 0 \\ 0 & 0 \end{pmatrix},\quad
P^{-1}\epsilon_{2}P = \begin{pmatrix} 0 & 0 \\ 0 & 1 \end{pmatrix}.
\]  
Applying the variable substitution  
\[
x = \frac{1}{2}(m - 3n), \quad y = n,
\]  
we simplify \(f\) to  
\[
f = m^{3} + 2m^{2} - 5n^{2} + 3m - 4n + 7.
\] 

Therefore, $f$ belongs to case (2) in Theorem~\ref{ND} and is of Newton's canonical form \eqref{5.3}.}
\end{example}

\begin{example}\label{new added label2}
{\upshape Consider the real binary cubic polynomial  
\[
f=9x^{3}+21x^{2}y+33xy^{2}+28y^{3}+4x^{2}+2xy+5y^{2}+x+3y+1.
\]  

Its Hessian matrix is  
\[
H_{f}=2 \begin{pmatrix}
27x+21y+4 & 21x+33y+1 \\[2pt]
21x+33y+1 & 33x+84y+5
\end{pmatrix}.
\]  
The center \(Z(f)\) is the solution space of  
\[
\begin{cases}
9z_{12} + 7z_{22} = 7z_{11} + 11z_{21}, \\[2pt]
7z_{12} + 11z_{22} = 11z_{11} + 28z_{21}, \\[2pt]
4z_{12} + z_{22} = z_{11} + 5z_{21},
\end{cases}
\]  
which gives \(z_{11}=z_{22},\ z_{12}=z_{21}=0\), hence \(Z(f) \cong \mathbbm{R}\).  

The Hessian matrix of the cubic homogeneous part \(f_3\) is  
\[
H_{f_{3}}= \begin{pmatrix}
54x+42y & 42x+66y \\[2pt]
42x+66y & 66x+168y
\end{pmatrix}.
\]  
The center \(Z(f_3)\) is obtained from  
\[
\begin{cases}
9z_{12} + 7z_{22} = 7z_{11} + 11z_{21}, \\[2pt]
7z_{12} + 11z_{22} = 11z_{11} + 28z_{21},
\end{cases}
\]  
yielding  
\[
Z(f_{3}) = \left\{ 
\begin{pmatrix}
b-7a & -3a \\[2pt]
2a & b
\end{pmatrix}
\;\Bigg|\; a,b \in \mathbbm{R} \right\} \cong \mathbbm{R}^{2}.
\]  

From \(Z(f_3)\) we construct the orthogonal idempotents  
\[
\epsilon_{1} = \begin{pmatrix}
-\frac{1}{5} & -\frac{3}{5} \\[2pt]
\frac{2}{5} & \frac{6}{5}
\end{pmatrix},\quad
\epsilon_{2} = \begin{pmatrix}
\frac{6}{5} & \frac{3}{5} \\[2pt]
-\frac{2}{5} & -\frac{1}{5}
\end{pmatrix}.
\]  
There exists an invertible matrix  
\[
P = \begin{pmatrix}
\frac{3}{5} & -\frac{1}{5} \\[2pt]
-\frac{1}{5} & \frac{2}{5}
\end{pmatrix}
\]  
such that  
\[
P^{-1}\epsilon_{1}P = \begin{pmatrix} 0 & 0 \\ 0 & 1 \end{pmatrix},\quad
P^{-1}\epsilon_{2}P = \begin{pmatrix} 1 & 0 \\ 0 & 0 \end{pmatrix}.
\]  

Applying the substitution  
\[
x = \frac{1}{5}(3p - q), \quad y = \frac{1}{5}(-p + 2q),
\]  
we diagonalize the cubic component as  
\[
f_{3}\!\left(\tfrac{1}{5}(3p-q),\,\tfrac{1}{5}(-p+2q)\right) = p^{3} + q^{3}.
\]  
Consequently, the full polynomial becomes  
\[
f\!\left(\tfrac{1}{5}(3p-q),\,\tfrac{1}{5}(-p+2q)\right)
= p^{3}+ q^{3} + \tfrac{7}{5}p^{2} - \tfrac{6}{5}pq + \tfrac{4}{5}q^{2} + q + 1.
\]  
Let \(m = \tfrac{1}{2}(p+q),\ n = \tfrac{1}{2}(p-q)\) and denote the transformed polynomial by \(g(m,n)\). Then  
\begin{align*}
			g(m,n)&=2m^{3}+6mn^{2}+m^{2}+\frac{17}{5}n^{2}+\frac{6}{5}mn+m-n+1\\
			&=2m^{3}+(6m+\frac{17}{5})n^{2}+m^{2}+\frac{6}{5}mn+m-n+1.
		\end{align*}
After a translation we obtain
$$ g\left(m - \frac{17}{30}, n - \frac{1}{10}\right)=6mn^2+2m^{3}-\frac{12}{5}m^{2}+\frac{26}{15}m-\frac{42}{25}n+\frac{377}{675}.$$
The polynomial falls into case (5) of Theorem~\ref{ND}, and hence is of Newton's canonical form \eqref{5.1}.}
\end{example}

\subsection{Irreducible Cubic Curves}
In this subsection, we apply the centers of polynomials to prove that, by homogenization, change of variables, and dehomogenization, every irreducible binary cubic polynomial can be transformed into form (1.3). We first derive the criterion for irreducibility of Newton's canonical forms, next transform the condition for equivalence to form (1.3) via the center structure, and finally show that such a condition can always be satisfied. 


	For convenience of the exposition, we also consider the homogenization of Newton's canonical forms (1.1)-(1.4):
		\begin{align}
			\label{hg1} &ax^{3} - xy^2 + bx^{2}z + cxz^{2} -eyz^{2} + dz^3, \\
			\label{hg2} &ax^{3} + bx^{2}z -xyz + cxz^{2} + dz^{3}, \\
			\label{hg3} &ax^{3} + bx^{2}z - y^{2}z + cxz^{2} + dz^{3}, \\
			\label{hg4} &ax^{3} + bx^{2}z + cxz^{2} - yz^{2} + dz^{3}.
		\end{align}
Of course, the irreducibility of a cubic curve and its homogenization is identical. 

We start with the criterion of irreducibility.
\begin{theorem}	\label{n factorizable}
Canonical forms of \eqref{hg3} and \eqref{hg4} are always irreducible. A canonical form of \eqref{hg2} is irreducible if and only if $d \ne 0$. As for a canonical form of \eqref{hg1}, it is irreducible if and only if $(ce^{2}-d^{2})^{2}-ae^{6}\ne 0$ or $be^{4}-2cde^{2}+2d^{3}\ne 0$.
\end{theorem}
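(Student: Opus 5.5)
The plan is to use the fact that a reducible ternary cubic must have a linear factor, and to organise each case by the degree of the homogenized form in $y$. Each of \eqref{hg1}--\eqref{hg4} has the shape $F=\alpha(x,z)\,y^2+\beta(x,z)\,y+g(x,z)$ with $g=ax^3+bx^2z+cxz^2+dz^3$. Since $\mathbbm{R}[x,y,z]=\mathbbm{R}[x,z][y]$ is a UFD, any nontrivial factorization of $F$ is of one of two types. Either a factor of degree $0$ in $y$ divides all coefficients $\alpha,\beta,g$ (a nontrivial content), or, when $F$ has degree $2$ in $y$, $F$ is a product of two factors each linear in $y$. Throughout I use the standing assumption from Theorem \ref{ND} that $a\neq 0$ for the forms \eqref{hg2}--\eqref{hg4}, i.e.\ cases (1)--(3).

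\textbf{Forms \eqref{hg4} and \eqref{hg2}.} Both are linear in $y$, so only the content case can occur.
\begin{itemize}
\item For \eqref{hg4}, $F=-yz^2+g$, and a common factor of $z^2$ and $g$ must be $z$. But $z\mid g$ forces $a=0$, which is excluded, so \eqref{hg4} is irreducible.
\item For \eqref{hg2}, $F=-xyz+g$, and a common factor must be $x$ or $z$. Again $z\mid g$ forces $a=0$, which is excluded, while $x\mid g$ holds iff $d=0$. Hence \eqref{hg2} is irreducible iff $d\neq 0$.
\end{itemize}

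\textbf{Form \eqref{hg3}.} Here $F=-y^2z+g$.
\begin{itemize}
\item The content case again needs $z\mid g$, which is impossible.
\item Otherwise $F=(\alpha_1y+\beta_1)(\alpha_2y+\beta_2)$ with $\alpha_1\alpha_2=-z$. Up to scalars, $\alpha_1=1$ and $\alpha_2=-z$.
\item The vanishing $y$-coefficient gives $\beta_2=z\beta_1$. Then the $y$-free term is $\beta_1\beta_2=z\beta_1^2$, which is divisible by $z$, so again $z\mid g$ and $a=0$, a contradiction.
\end{itemize}
So \eqref{hg3} is always irreducible.

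\textbf{Form \eqref{hg1}.} Here $F=-xy^2-eyz^2+g$, and I expect this to be the main (computational) step.

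\emph{Content case.} This needs $x\mid ez^2$ and $x\mid g$, i.e.\ $e=d=0$. One checks that the stated criterion indeed fails exactly when $d=0$ in that situation: with $e=0$ both expressions reduce to multiples of $d^4$ and $d^3$.

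\emph{Splitting case.} Normalize the factors as $(y+\beta)(-xy+\delta)$ with $\beta=px+qz$ and $\delta$ a quadratic form. Comparing coefficients of $y$ gives $\delta=x\beta-ez^2$, and comparing $y$-free terms gives $g=\beta(x\beta-ez^2)$. Expanding and matching monomials yields
\[
a=p^2,\qquad b=2pq,\qquad c=q^2-ep,\qquad d=-eq .
\]
Reducibility is equivalent to the existence of real $p,q$ solving this system.

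\emph{Eliminating $p,q$ when $e\neq0$.} The last two equations force
\[
q=-\frac{d}{e},\qquad p=\frac{d^2-ce^2}{e^3}.
\]
Substituting into the first two equations gives exactly
\[
(ce^2-d^2)^2-ae^6=0\quad\text{and}\quad be^4-2cde^2+2d^3=0 .
\]
Because $p,q$ are uniquely determined and real, reducibility is equivalent to both expressions vanishing. This is precisely the negation of the stated criterion.

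\emph{The case $e=0$.} The splitting system forces $d=0$, which is already the content case. So for $e=0$ reducibility is again equivalent to $d=0$, which agrees with the criterion.

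Combining the content and splitting cases finishes \eqref{hg1}. The only delicate points are keeping track of the normalization of the $y$-linear factors (the swapped choice $\alpha_1=x$, $\alpha_2=-1$ gives the same equations by symmetry) and treating $e=0$ separately.
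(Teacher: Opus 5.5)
Your proof is correct, but it is organized differently from the paper's.

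\emph{Your route.} You view each form as a polynomial in $y$ over $\mathbbm{R}[x,z]$. A factorization then either extracts a nontrivial content or splits into two $y$-linear factors.

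\emph{The paper's route.} It writes $f=lg$ with $l$ linear and uses the restrictions of $f$ to the lines $z=0$ and $x=0$ (e.g.\ $f(0,y,z)=z^2(dz-ey)$ for \eqref{hg1}) to force most coefficients of $l$ and $g$ to vanish. It then compares the remaining coefficients of the normalized $l=px+dz-ey$ and $g=qx^2+z^2+rxy+sxz$ in a six-equation system. That system only gives necessity, so the paper proves sufficiency separately by exhibiting explicit factorizations in the subcases $e=0$, $e\neq 0=d$, and $de\neq0$.

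\emph{What each buys.} Your parametrization $a=p^2$, $b=2pq$, $c=q^2-ep$, $d=-eq$ is cleaner. For $e\neq0$ it determines $p,q$ uniquely, so necessity and sufficiency come out together through the single factorization $(y+px+qz)(-xy+px^2+qxz-ez^2)$, which agrees up to scalars with the paper's explicit factors. It also makes side facts immediate, such as the paper's later remark that $e\neq0$, $a<0$ forces irreducibility, since $a=p^2$. The paper's argument, in exchange, needs nothing beyond direct coefficient comparison.

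Both arguments rely on $a\neq0$ for \eqref{hg2}--\eqref{hg4}, which is guaranteed by cases (1)--(3) of Theorem \ref{ND}.
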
        
			
	\begin{proof}
    We prove the contrapositive. Let $f$ be a reducible real ternary cubic form. Certainly it possesses a linear factor, denoted by $l$. Thus we may write $f=lg$, where $g$ is a ternary quadratic form.
    \begin{itemize}
    \item[(1)] If $f$ is of the form \eqref{hg3} , then $f(x, y, 0)=ax^{3}$. Therefore, the coefficient of $x$ in $l$ and that of $x^2$ in $g$ are both nonzero. Moreover, the coefficient of $y$ in $l$ and those of $xy$ and $y^{2}$ in $g$ are all $0$. Since the coefficient of $xyz$ in $f$ equals $0$, the coefficient of $yz$ in $g$ equals $0$. However, now $f=lg$ has no terms containing $y$, which leads to a conflict.
	
	\item[(2)] For forms of type \eqref{hg4}, the argument is the same as above.
	
	\item[(3)] If $f$ is of the form \eqref{hg2} and $d\ne 0$, then $f(0, y, z)=dz^{3}$ and $f(x, y, 0)=ax^{3}$. Therefore, the coefficients of $x$ and $z$ in $l$ and those of $x^2$ and $z^2$ in $g$ are all nonzero. So the coefficient of $y$ in $l$ and those of $xy$, $yz$ and $y^{2}$ in $g$ are all $0$. However, now $f=lg$ has no terms containing $y$, which leads to a conflict. On the other hand, $\eqref{hg2}$ always has a linear factor $x$ when $d=0$.

	\item[(4)] If $f$ is of the form \eqref{hg1}, then $f(0, y, z)=z^{2}(dz-ey)$. If $d=e=0$, then $f$ always has a linear factor $x$. Suppose that at least one of $d$ and $e$ is nonzero, then coefficient of $y^{2}$ in $g$ equals $0$. Since the coefficient of $xy^{2}$ in $f$ is nonzero, the coefficient of $y$ in $l$ is nonzero. Thus $e\ne 0$,  and the coefficient of $yz$ in $g$ equals $0$. Write $l=px+dz-ey$ and $g=qx^{2}+z^{2}+rxy+sxz$, and then
\[
\begin{cases}
				pq=a, \\[2pt]
				er=1,\\[2pt]
				pr-eq=0,\\[2pt]
				dr-es=0,\\[2pt]
				dq+ps=b,\\[2pt]
				ds+p=c.
\end{cases}
\]
	
	The first three equations are equivalent to $r=\frac{1}{e}$, $q=\frac{p}{e^{2}}$ and $p^{2}=ae^{2}$. Therefore $a=\frac{p^2}{e^2}\geqslant 0$. Combining these with the fourth equation, we obtain $s=\frac{d}{e^{2}}$. Substituting these into the last two equations, we have $p=c-\frac{d^{2}}{e^{2}}$ and $2dp=be^{2}$. Finally after $p$ is eliminated, we obtain $(ce^{2}-d^{2})^{2}-ae^{6}=be^{4}-2cde^{2}+2d^{3}=0$. Note that $d=e=0$ also implies the two equations. Now assume that
 \[(ce^{2}-d^{2})^{2}-ae^{6}=be^{4}-2cde^{2}+2d^{3}=0\] in \eqref{hg1}. If $e=0$, then $d=0$ and $\eqref{hg1}$ has a linear factor $x$. If $e\ne 0$ and $d=0$, then $b=0$ and $a=\frac{c^{2}}{e^{2}}$. Let $l=\frac{c}{e}x-y$ and $g=\frac{c}{e}x^{2}+ez^{2}+xy$. Then \eqref{hg1} factors into $lg$. If neither $d$ nor $e$ equals $0$, then \eqref{hg1} factors into $lg$ where $l=\frac{be^{2}}{2d}x+dz-ey$ and $g=\frac{b}{2d}x^{2}+z^{2}+\frac{1}{e}xy+\frac{d}{e^{2}}xz$.
\end{itemize}
	Now the proof is completed.
\end{proof}

\begin{remark} 
	According to the proof, we have two simpler sufficient conditions for the irreducibility of \eqref{5.1} which are $e\ne 0, a<0$ and $e=0, d\ne 0$. Furthermore, during the process of eliminating $p$, an extra expression $b^{2}e^{2}-4ad^{2}$ occurs. Therefore $b^{2}e^{2}-4ad^{2}\ne 0$ is another sufficient condition. Besides, the two inequalities in the equivalence are independent. For example, when $(a, b, c, d, e)=(1, 0, 0, 0, 1)$, then $(ce^{2}-d^{2})^{2}-ae^{6}=-1\ne 0=be^{4}-2cde^{2}+2d^{3}$, while $(a, b, c, d, e)=(1, 0, 0, 1, 1)$, then $(ce^{2}-d^{2})^{2}-ae^{6}=0\ne 2=be^{4}-2cde^{2}+2d^{3}$.
\end{remark}

\begin{example}
	The polynomial in Example \ref{new added label1} is of the form \eqref{5.3}, which is irreducible. The polynomial in Example \ref{new added label2} is of the form \eqref{5.1}. The corresponding coefficients satisfy $e\ne 0$ and $a<0$. Thus it is irreducible.
\end{example}


	

Next we will apply the centers to prove that each irreducible binary cubic polynomial can be transformed into the following form
\begin{equation}
y^2=x^3+cx+d \label{nselp}
\end{equation} 
which is a translation of the form \eqref{5.3}. Let $J_{f}=(f_{x}, f_{y}, f_{z})$ denote the Jacobian matrix of the form $f(x, y, z)$. 
\begin{theorem}	\label{WSF and FLEX0}
Let $u=(x, y, z)^T$ and let $f(u)$ be an irreducible real ternary cubic form. Then $f\sim x^{3}+cxz^{2}+dz^{3}-y^{2}z$ if and only if there exist non-proportional $p_{1}$ and $p_{2}\in\mathbbm{R}^{3}$ such that $f(p_{2})=0$, $p_{1}^T H_f(p_{2})=0$ and $J_{f}(p_{2})\ne 0$.
	\end{theorem}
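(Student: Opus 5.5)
The plan is to read the three conditions geometrically: $p_2$ is a smooth point of the curve, and $p_1$ lies in the kernel of the Hessian matrix at $p_2$. By the symmetry of third derivatives of a cubic form, $p_1^T H_f(p_2)=p_2^T H_f(p_1)$. By Euler's formula, $H_f(p_2)p_2=2J_f(p_2)^T$ and $p^T H_f(p)p=6f(p)$. So the condition says that $p_2$ is a smooth flex, and that $p_1$ is a second point on its tangent line $J_f(p_2)u=0$.

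First I check that the condition is invariant under equivalence. If $g(u)=\lambda f(Pu)$, then $H_g(u)=\lambda P^TH_f(Pu)P$ and $J_g(u)=\lambda J_f(Pu)P$. Hence $(p_1,p_2)$ works for $g$ exactly when $(Pp_1,Pp_2)$ works for $f$, and non-proportionality is preserved.

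\textbf{Necessity.} For $f=x^3+cxz^2+dz^3-y^2z$ I take $p_2=(0,1,0)^T$ and $p_1=(1,0,0)^T$. Then $f(p_2)=0$ and $J_f(p_2)=(0,0,-1)\neq0$. A direct evaluation gives
\[
H_f(p_2)=\begin{pmatrix}0&0&0\\0&0&-2\\0&-2&0\end{pmatrix},
\]
whose first row is zero, so $p_1^TH_f(p_2)=0$.

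\textbf{Sufficiency.} Let $p_1,p_2$ be as in the statement. Since $p_1^TH_f(p_2)=0$, we get $0=p_1^TH_f(p_2)p_2=2J_f(p_2)p_1$, so $p_1$ lies on the tangent line at $p_2$.
\begin{itemize}
\item[(a)] Choose $P\in\operatorname{GL}(3,\mathbbm{R})$ sending $(0,1,0)^T\mapsto p_2$ and $(1,0,0)^T\mapsto p_1$, with the tangent line becoming $z=0$. This is possible because $p_1,p_2$ are independent and both lie on that line.
\item[(b)] Expand $f(sp_1+tp_2)$ by Taylor. The $t^3$ coefficient is $f(p_2)=0$ and the $t^2s$ coefficient is $J_f(p_2)p_1=0$. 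The $ts^2$ coefficient is $\tfrac12p_1^TH_f(p_2)p_1=0$. Hence $f(x,y,0)=\alpha x^3$.
\item[(c)] Irreducibility forces $\alpha\neq0$; otherwise $z$ divides $f$. So $f=\alpha x^3+z\,q(x,y,z)$ with $q$ quadratic.
\item[(d)] At $p_2$ we have $f_x=f_y=0$ and $f_z=q(0,1,0)$, the $y^2$-coefficient $\beta$ of $q$. The condition $J_f(p_2)\ne0$ gives $\beta\neq0$.
\end{itemize}

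Now I normalize by completing squares, in the style of Theorem \ref{ND}. Write
\[
f=\alpha x^3+z\bigl(\beta y^2+y(\gamma x+\delta z)+Q(x,z)\bigr).
\]
\begin{itemize}
\item[(1)] The substitution $y\mapsto y-(\gamma x+\delta z)/(2\beta)$ removes the $xyz$ and $yz^2$ terms. This gives $\alpha x^3+\beta y^2z+ex^2z+gxz^2+hz^3$.
\item[(2)] The substitution $x\mapsto x-\frac{e}{3\alpha}z$ kills the $x^2z$ term.
\item[(3)] Scale $x$ by the real cube root of $\alpha^{-1}$ so the $x^3$ coefficient is $1$. Then multiply $y$ by $|\beta|^{-1/2}$.
\item[(4)] If $\beta>0$, replace $(x,z)$ by $(-x,-z)$ and multiply $f$ by $\lambda=-1$. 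This keeps $x^3$ and turns $y^2z$ into $-y^2z$.
\end{itemize}
The result is $x^3+cxz^2+dz^3-y^2z$.

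I expect the main obstacle to be steps (a)--(b): choosing coordinates so that both the flex and the auxiliary kernel vector $p_1$ are used correctly, and verifying the three Taylor coefficients via the symmetry and Euler identities. After that, the normalization is routine.
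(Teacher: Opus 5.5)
Your proof is correct apart from one slip, and it takes a different route from the paper.

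\textbf{The slip is in step (4).} Every monomial of $x^3+y^2z+cxz^2+dz^3$ has odd degree in $(x,z)$. So $(x,z)\mapsto(-x,-z)$ just negates the whole form, and multiplying by $\lambda=-1$ gives back $x^3+y^2z+cxz^2+dz^3$ unchanged. The sign of $y^2z$ relative to $x^3$ is never flipped. The fix is immediate: substitute $z\mapsto -z$ alone, which gives $x^3-y^2z+cxz^2-dz^3$. Alternatively, use $x\mapsto -x$ together with $\lambda=-1$. With this correction the argument goes through. The invariance check, the witness $p_2=(0,1,0)^T$, $p_1=(1,0,0)^T$ for necessity, the Taylor coefficients in (b), the use of irreducibility to get $\alpha=f(p_1)\neq0$, and $\beta\neq0$ from $J_f(p_2)\neq0$ are all right.

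\textbf{How the paper argues instead.} The paper does not normalize by hand. It takes $P=(p_1,p_2,p_3)$, dehomogenizes $g=f(Pu)$, and uses case (2) of Theorem \ref{ND} to restate the target form as a center condition: $Z(g_*)=\{\mathrm{diag}(s,t)\}\subset Z({g_*}_3)=T$. This becomes vanishing of the entries $p_1^TH_f(P(x,y,1)^T)p_2$, $p_1^TH_f(P(x,y,0)^T)p_2$ and $p_2^TH_f(P(x,y,0)^T)p_2$, together with non-vanishing of the remaining diagonal entries. The same two identities you use, trilinear symmetry and Euler's formula, collapse this to four conditions:
\[
p_1^TH_f(p_2)=0,\qquad f(p_2)=0,\qquad J_f(p_2)\neq0,\qquad f(p_1)\neq0.
\]
Irreducibility then removes $f(p_1)\neq0$ exactly as in your step (c).

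\textbf{What each route buys.} The paper's version gets both implications from a single chain of equivalences and keeps the result inside its center framework. Yours does not depend on Theorems \ref{cubicform} and \ref{ND} at all. It is more explicit about the actual coordinate change, which the paper compresses into ``by completing powers''. It also makes the geometric content visible: a smooth flex, plus a second point on its tangent line.
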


	\begin{proof}
	For any irreducible real ternary cubic form $f(u)$ and invertible matrix $P=(p_{1}, p_{2}, p_{3})\in\operatorname{GL}(3, \mathbbm{R})$ where $p_{1}$, $p_{2}$, $p_{3}\in\mathbbm{R}^3$ are linearly independent, $g(u)=f(Pu)$ is also irreducible. Hence the dehomogenization with respect to $z$ of $g$, denoted by $g_{*}(x, y)$, is always a binary cubic polynomial. As $g_{*}(x, y)=g(x, y, 1)$, ${g_{*}}_3(x, y)=g(x, y, 0)$ and $H_g(u)=P^{T} H_f(Pu)P$, it follows that
\begin{align*}
	H_{g_{*}}= \begin{pmatrix}
		g_{xx}(x, y, 1) & g_{xy}(x, y, 1) \\[2pt]
		g_{xy}(x, y, 1) & g_{yy}(x, y, 1)
	\end{pmatrix}
	= \begin{pmatrix}
		p_{1}^T H_f(P(x, y, 1)^T) p_{1} & p_{1}^T H_f(P(x, y, 1)^T) p_{2} \\[2pt]
		p_{2}^T H_f(P(x, y, 1)^T) p_{1} & p_{2}^T H_f(P(x, y, 1)^T) p_{2} 
\end{pmatrix},
	\\[2pt]
	H_{{g_{*}}_3}=	\begin{pmatrix}
		g_{xx}(x, y, 0) & g_{xy}(x, y, 0) \\[2pt]
		g_{xy}(x, y, 0) & g_{yy}(x, y, 0)
\end{pmatrix}
	= \begin{pmatrix}
		p_{1}^T H_f(P(x, y, 0)^T) p_{1} & p_{1}^T H_f(P(x, y, 0)^T) p_{2} \\[2pt]
		p_{2}^T H_f(P(x, y, 0)^T) p_{1} & p_{2}^T H_f(P(x, y, 0)^T) p_{2} 
	\end{pmatrix}.
\end{align*} 

	By completing powers, case (2) of Theorem \ref{ND} is always equivalent to $x^{3}+cx+d-y^{2}$. So $f\sim x^{3}+cxz^{2}+dz^{3}-y^{2}z$ if and only if there exists $P\in\operatorname{GL}(3, \mathbbm{R})$ such that $g(u)=f(Pu)$ satisfies
\[
\left\{ 
\begin{pmatrix}
s & 0 \\[2pt]
0 & t
\end{pmatrix}
\;\Bigg|\; s, t \in \mathbbm{R} \right\} = Z(g_{*}) \subset Z({g_{*}}_3) = T,
\]
if and only if there exists $P=(p_{1}, p_{2}, p_{3})\in\operatorname{GL}(3, \mathbbm{R})$ such that	
\[
\begin{cases}
			p_{1}^T H_f\left(P(x, y, 1)^T\right) p_{2}=0, \\[2pt]
			p_{1}^T H_f\left(P(x, y, 0)^T\right) p_{2}=0, \\[2pt]
			p_{2}^T H_f\left(P(x, y, 0)^T\right) p_{2}=0, \\[2pt]
			p_{1}^T H_f\left(P(x, y, 1)^T\right) p_{1}\ne 0, \\[2pt]
			p_{2}^T H_f\left(P(x, y, 1)^T\right) p_{2}\ne 0, \\[2pt]
			p_{1}^T H_f\left(P(x, y, 0)^T\right) p_{1}\ne 0.
\end{cases}
\]  

The correspondence between cubic forms and symmetric $3$-linear functions yields that for any $v\in\mathbbm{R}^3$, the equality $p_{i}^T H_f(v) p_{j}=p_{i}^T H_f(p_{j}) v$ holds. In fact, given an arbitrary ternary cubic form $\varphi=\sum_{1\leqslant i, j, k\leqslant 3}\lambda_{ijk}x_i x_j x_k$ where $\lambda_{l_1 l_2 l_3}=\lambda_{l_{\sigma(1)} l_{\sigma(2)} l_{\sigma(3)}}$ for every $\sigma\in S_3$, three arbitrary vectors $a_i=(a_{i1}, a_{i2}, a_{i3})\in \mathbbm{R}^3 ( i=1, 2, 3)$ and an arbitrary permutation $\tau\in S_3$, we have
\begin{align*}
a_1 H\left(a_2^T\right) a_3^T&=\sum_{1\leqslant l_1, l_2, l_3\leqslant 3} \lambda_{l_1 l_2 l_3} a_{1 l_1} a_{2 l_2} a_{3 l_3}\\
&=\sum_{1\leqslant l_1, l_2, l_3\leqslant 3} \lambda_{l_{\tau^{-1}(1)} l_{\tau^{-1}(2)} l_{\tau^{-1}(3)}} a_{\tau(1) l_1} a_{\tau(2) l_2} a_{\tau(3) l_3}\\
&=\sum_{1\leqslant l_1, l_2, l_3\leqslant 3} \lambda_{l_1 l_2 l_3} a_{\tau(1) l_1} a_{\tau(2) l_2} a_{\tau(3) l_3}\\
&=a_{\tau(1)} H\left(a_{\tau(2)}^T\right) a_{\tau(3)}^T.
\end{align*}
    Meanwhile, Euler's theorem for homogeneous functions indicates that $p_{i}^T H_f(p_{i})=2J_{f}(p_{i})$ and $J_{f}(p_{i})p_{i}=3f(p_i)$. Note that $x$, $y$, $1$ are $\mathbbm{R}$-linearly independent, the above system of equations and inequalities is equivalent to
\[
\begin{cases}
			p_{1}^T H_f(p_{2})=0, \\[2pt]
			f(p_{2})=0, \\[2pt]
			f(p_{1})\ne 0, \\[2pt]
			J_{f}(p_{2})\ne 0.
\end{cases}
\]  
For fixed $p_1$ and $p_2$ which are linearly independent, any choice of $p_3$ linearly independent with them, $p_1\times p_2$ for example, makes $P$ invertible.

	Now it suffices to show that the third inequality is redundant under the condition that $f$ is irreducible. In fact, if $f(p_{1})=f(p_{2})=0$ and $p_{1}^T H_f(p_{2})=0$, then $g(0, 1, 0)=g(1, 0, 0)=0$ and $g_{xy}=0$. Thus $g$ is divisible by $z$, which contradicts the irreducibility.
\end{proof}
 
	Denote the determinant of the Hessian matrix of $f$, i.e., the Hessian determinant $\det H(f)$, by $h_f$. Then $[p_{2}]\in\mathbbm{R}P^{2}$ in the above theorem satisfies $h_{f}(p_{2})=f(p_{2})=0$ and $J_{f}(p_{2})\ne 0$. Conversely, for any $[p_{2}]\in\mathbbm{R}P^{2}$ such that $h_{f}(p_{2})=f(p_{2})=0$ and $J_{f}(p_{2})\ne 0$, there exists $p_{1}\in\mathbbm{R}^3$ linearly independent with $p_{2}$ such that $p_{1}^T H_f(p_{2})=0$. 


\begin{remark} 	\label{algflex}
Recall that a point $[p]\in\mathbbm{R}P^{2}$ is said to be a flex of the ternary cubic $f$ if  $h_{f}(p)=f(p)=0$ and $J_{f}(p)\ne 0$. In the previous theorem, we rediscover flex points which were usually defined in the more geometric manner, see \cite{B}.
	\end{remark} 


	
Finally we are ready to prove that all irreducible cubic curves can be transformed into \eqref{nselp}.

\begin{theorem}
  Every irreducible binary cubic polynomial can be transformed into the form \eqref{nselp}.  
\end{theorem}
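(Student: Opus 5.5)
By Theorem \ref{WSF and FLEX0} and the discussion after it, the statement reduces to a single claim: the homogenization $f(x,y,z)$ of an irreducible binary cubic polynomial has a real flex. That is, we need $[p_2]\in\mathbbm{R}P^2$ with $f(p_2)=h_f(p_2)=0$ and $J_f(p_2)\ne 0$. Once such a point is found, we choose $p_1$ independent of $p_2$ with $p_1^TH_f(p_2)=0$. This is possible since $H_f(p_2)$ is singular, and if it were zero then Euler's relation $p_2^TH_f(p_2)=2J_f(p_2)$ would force $J_f(p_2)=0$. Theorem \ref{WSF and FLEX0} then gives $f\sim x^3+cxz^2+dz^3-y^2z$. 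Dehomogenizing, we obtain $x^3+cx+d-y^2$, i.e.\ the form \eqref{nselp}.

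To produce the flex, I would use Corollary \ref{CANONICAL}. It lets us assume from the start that the curve is in one of Newton's canonical forms \eqref{hg1}--\eqref{hg4}, and then we exhibit a flex in each case.

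\textbf{Form \eqref{hg3}.} This form is already what we want, after completing the cube in $x$ (as in the proof of Theorem \ref{WSF and FLEX0}).

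\textbf{Form \eqref{hg4}.} Take $p_2=(0,1,0)$. Then $f(p_2)=0$, the Jacobian there is $(0,0,-1)\ne0$, and a direct check shows $H_f(p_2)$ has only zero entries except those involving $z^2$-terms, so $h_f(p_2)=0$.

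\textbf{Form \eqref{hg2}.} Here $d\ne0$ by Theorem \ref{n factorizable}. Again try $p_2=(0,1,0)$. We have $f(p_2)=0$ and $J_f(p_2)=(0,0,0)$ apart from the $-xyz$ contribution, which vanishes at this point, so $p_2$ is singular and cannot be used. Instead I would look at the infinite point along the asymptote direction, or at a real root in the pencil. Failing that, the fallback is the general argument described next.

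\textbf{Form \eqref{hg1}.} Restricted to the line $z=0$, the form gives $x(ax^2-y^2)$, and $h_f$ restricted to $z=0$ is a cubic binary form. I would compute it explicitly and look for a common real zero.

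In case the case analysis becomes messy, the uniform fallback argument is as follows. Consider the real cubic curve $f=0$ and the sextic curve $h_f=0$. A real cubic has an odd-degree branch, and one can restrict to a real line meeting it. The more elementary route is this: parametrize near a smooth real point the affine curve as a graph $y=\phi(x)$ on an unbounded branch. This branch exists because a real binary cubic polynomial in canonical form has an unbounded real component of odd degree. Flexes correspond to sign changes of $\phi''$. An asymptotic analysis of the branch at its two ends shows that the curvature changes sign, or else a flex lies at infinity, where the previous case-by-case computation applies. We must also rule out the situation where the resulting point is singular; for irreducible cubics this is fine because there is at most one singular point, and the zero of $\phi''$ occurs at a smooth point of the branch.

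I expect the main obstacle to be this existence of a real flex, and specifically guaranteeing that it is a smooth point. The cusp and node cases need care: for an irreducible nodal or cuspidal cubic there is exactly one real flex, and it must be located explicitly. I would first try direct computation on \eqref{hg1} and \eqref{hg2}, solving $f=h_f=0$ on the line at infinity $z=0$ or on the coordinate line $x=0$. These are the natural candidates, since Newton's forms place asymptote and inflection information on those lines.
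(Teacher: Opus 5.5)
\textbf{Verdict: there is a genuine gap.} Your reduction is correct and matches the paper's, but you never establish the existence of a real flex except for \eqref{hg3}, and that existence is the entire content of the theorem.

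\textbf{The reduction.} By Corollary~\ref{CANONICAL}, Theorem~\ref{WSF and FLEX0} and Remark~\ref{algflex}, everything comes down to exhibiting a real flex on each of \eqref{hg1}--\eqref{hg4}. Your choice of $p_1$ is fine. Since $p_2^TH_f(p_2)=2J_f(p_2)\neq 0$, any nonzero vector in the left kernel of $H_f(p_2)$ is automatically independent of $p_2$.

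\textbf{Form \eqref{hg4}.} Your point is wrong. Every monomial of $ax^3+bx^2z+cxz^2-yz^2+dz^3$ has degree at least $2$ in $(x,z)$, so all partials vanish at $(0,1,0)$. For example, $f_z=bx^2+2cxz-2yz+3dz^2$ is $0$ there, not $-1$. So $[0:1:0]$ is the cusp at infinity, not a flex. The actual flex is the affine inflection point of the graph at $x=-b/(3a)$, i.e.\ the point $[b:y_0:-3a]$ of $C$ with $y_0$ fixed by $f=0$. It lies on $h_f=-8z^2(3ax+bz)=0$ and has $f_y=-9a^2\neq 0$.

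\textbf{Form \eqref{hg2}.} You give no point, and looking at infinity cannot work. The only point at infinity is $[0:1:0]$, which you yourself found to be singular, so the flex must be affine. The paper uses $ad\neq0$ to normalize to $x^3+z^3-xyz$ and exhibits the flex $[1:0:-1]$.

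\textbf{Form \eqref{hg1}: the decisive gap.} This form contains the generic cubic, and your proposed candidates on the coordinate lines fail in general.
\begin{itemize}
\item $[0:1:0]$ is smooth with $h_f(0,1,0)=8e$. So it is a flex only when $e=0$, which is exactly the subcase the paper handles by converting to \eqref{hg3}.
\item $[1:s:0]$ with $s^2=a>0$ is a flex only if $b^2=4a(c-es)$.
\end{itemize}
For $e\neq0$ you must actually prove that $f=h_f=0$ has a real solution, and that it can be chosen away from a possible singular point. The paper does this in three steps:
\begin{itemize}
\item it changes coordinates to $ax^3+3cxz^2+dz^3+3xy^2+3yz^2$ with $a>0$;
\item it uses a resultant in the slope $k$, whose odd-degree leading term $2(d^2+9c)k^9$ forces a real common solution;
\item when $C$ is singular, it locates the unique singular point explicitly, moves it to $[0:0:1]$, and shows by a second resultant computation that $g=h_g=0$ has a real solution off it.
\end{itemize}

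\textbf{Your analytic fallback does not replace this.} An unbounded branch need not be a single graph over $x$. Also, $\phi''$ need not change sign on it. For $y^2=x^2(x+1)$, the branch $\phi=x\sqrt{x+1}$ has $\phi''=(3x+4)/\bigl(4(x+1)^{3/2}\bigr)>0$ on $(-1,\infty)$, and the only real flex is at infinity. So your ``flex at infinity'' alternative genuinely occurs, and it rests on the case computations you did not carry out.

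\textbf{Minor errors.} $h_f$ of a ternary cubic is a cubic, not a sextic. An acnodal cubic such as $y^2=x^2(x-1)$ has three real flexes, not one.

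\textbf{A possible fix.} If you want a non-computational way to close the gap, a parity argument works, though it uses exactly the intersection theory the paper sets out to avoid.
\begin{itemize}
\item By B\'ezout, a cubic meets its Hessian with total multiplicity $9$.
\item A node absorbs $6$ of this and a cusp absorbs $8$.
\item The unique singular point of an irreducible real cubic is real.
\item Hence the flexes have odd total multiplicity, and complex conjugation must fix one of them.
\end{itemize}
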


	\begin{proof}
		Let $f(x, y, z)$ be an irreducible ternary cubic form defining a curve $C$. By Corollary \ref{CANONICAL}, we may assume that $f$ has one of the forms \eqref{hg1}-\eqref{hg4}. Theorem \ref{WSF and FLEX0} and Remark \ref{algflex} imply that it suffices to show $C$ has a flex.
		
    \begin{itemize}
        \item[(1)] If $f$ is of the form \eqref{hg3}, that is, $f=ax^{3} + bx^{2}z - y^{2}z + cxz^{2} + dz^{3}$ where $a\ne 0$, then after a translation, we may assume $b=0$ and $f$ itself is of the form \eqref{nselp}.
        
        \item[(2)] If $f$ is of the form \eqref{hg4}, that is, $f=ax^{3} + bx^{2}z+ cxz^{2} - yz^{2} + dz^{3}$ where $a\ne 0$, then $f_y=-z^2$ and $h_f=-8z^2(3ax+bz)$. Put $y_{0}=bc-3ad-\frac{2b^3}{9a}$. Then $f(b, y_{0}, -3a)=h_f(b, y_{0}, -3a)=0$ and $f_y(b, y_{0}, -3a)=-9a^2\ne 0$. And thus, $J_f(b, y_{0}, -3a)\ne 0$. Therefore, $[b: y_0: -3a]$ is a flex of $C$.
        
        \item[(3)] If $f$ is of the form \eqref{hg2}, that is, $f=ax^{3} + bx^{2}z -xyz + cxz^{2} + dz^{3}$, then $a d\ne 0$ by Theorem \ref{n factorizable}. It follows that
$$g(x, y, z)=f\left(\frac{x}{\sqrt[3]{a}}, \sqrt[3]{ad}y+\frac{bx}{\sqrt[3]{a}}+\frac{cz}{\sqrt[3]{d}}, \frac{z}{\sqrt[3]{d}}\right)=x^{3}+z^{3}-xyz.$$
As $J_g=(3x^2-yz, -xz, 3z^2-xy)$ and $h_g=-2(3x^{3}+3z^{3}+xyz)$, we see that $g(1, 0, -1)=h_g(1, 0, -1)=0$ and $J_g(1, 0, -1)=(3, 1, 3)\ne 0$. Therefore, $[1: 0: -1]$ is a flex of $C': g=0$, which implies that there is a flex on $C$.
        
        \item[(4)] Now it remains to consider the case of $f$ being of the form \eqref{hg1}. After rescaling, we may write
$$f=ax^3+3bx^2z+3cxz^2+dz^3-3xy^2-3eyz^2.$$ 

If $e=0$, then $d\ne 0$ by the irreducibility and $f\left(\frac{1}{3}z, y, x\right)=dx^{3}+cx^{2}z+\frac{b}{3}xz^{2}+\frac{a}{27}z^{3}-y^2z$ is of the form \eqref{hg3} and $C$ has a flex.

If $e\ne 0$, then let $e=1$ without loss of generality. We split the proof into three steps.

\noindent\textit{Step 1: Reducing this case to $ax^3+3cxz^2+dz^3+3xy^2+3yz^2 \ (a>0)$.}

 Let $p$ be a real root of the equation $p^3+dp^2+2cp+b=0$ and
$$g(x, y ,z)=f(x', y', z')= f(x, z-py+(c+dp+p^2)x, y+px).$$
The irreducibility ensures that $g_{*}$ is a binary polynomial of degree 3. Since$$
		g_{xy}(x, y ,z)=6(p^3+dp^2+2cp+b)x
		=0,$$
 $H_{g_{*}}$ is a diagonal matrix. Accordingly, $Z(g_{*})\cong\mathbbm{R}^2$ or $T$. By Theorem \ref{ND}, $f$ is equivalent to either form \eqref{hg3} or \eqref{hg4}, each possessing a flex, or to the form$$ax^3+3cxz^2+dz^3+3xy^2+3yz^2 \ (a>0).$$
\noindent\textit{Step 2: Finding a flex or a singularity.}

 Write $f(x, y, z)=ax^3+3cxz^2+dz^3+3xy^2+3yz^2 \ (a>0)$. Its Hessian determinant is
$$h_f=216(acx^3+ax^2y-cxy^2-y^3+adx^2z-(a+c^2)xz^2-dy^2z+2cyz^2).$$ 
Recall that for a nonzero solution $p=(x_0,y_0,z_0)$ of $f=h_f=0$, $[p]$ is a flex of $C$ if $J_f(p)\neq 0$, and a singularity of $C$ if $J_f(p)=0$. It is easy to verify that $z_0\ne 0$ for any nonzero solution $(x_0, y_0, z_0)$ of $f=h_f=0$.

\noindent\textit{Case 1: $d(d^2+9c)=0$.}

In this case,
$$f(0, d, -3)=h_f(0, d, -3)=0,$$ so $f=h_f=0$ does have a nonzero solution. 

\noindent\textit{Case 2: $d(d^2+9c)\ne 0$.}

In this situation we have $x_0\ne 0$. Otherwise, putting $x_0=0$ into the system $f=h_f=0$ yields $d^2+9c=0$. Set $z=1$ and $y=kx$. Then the equation system $f=\frac{1}{x} h_f=0$ becomes 
 	\begin{align} \label{Re0}
		\begin{cases}
(a+3k^2)x^3+3(c+k)x+d=0,       \\[2pt]
(a-k^2)(c+k)x^2+d(a-k^2)x-(a+c^2-2ck)=0. 
		\end{cases}
	\end{align}
Since $a+3k^2>0$, the first equation is always cubic. Then direct computation shows that the leading term of the resultant of these two equations is $2(d^2+9c)k^9$. Hence there is a real $k_0$ such that the resultant vanishes and a complex solution to \eqref{Re0} exists.

If $(a-k_0^2)(c+k_0)=0$, then \eqref{Re0} must have a real solution. If $(a-k_0^2)(c+k_0) \ne 0$ and the second equation has a pair of non-real roots, then it has to divide the first real cubic equation. Euclidean division then gives $d(d^2+9c)=0$, contrary to the assumption. Therefore, it can only have real solutions and so does the whole \eqref{Re0}. Thus, there is a $[q]\in C$ such that $f(q)=h_f(q)=0$, which indicates that $[q]$ is a flex or a singularity. 

If $C$ is nonsingular, then $J(q)\ne 0$ and $q$ is a flex of $C$. 

\noindent\textit{Step 3: Coping with the singular case.}

Now suppose that $C$ is singular. The condition for a singularity is $J_f=0$, that is, the system of equations
 	\begin{align}\label{sp1}
		\begin{cases}
 	f_x=3(ax^2+y^2+cz^2)=0, \\[2pt]
	f_y=3(2xy+z^2)=0, \\[2pt]
	f_z=3(2cxz+2yz+dz^2)=0.                     
		\end{cases}
	\end{align}
As $J_f=0$ implies $f=h_f=0$, we may assume $z=1$. Then system \eqref{sp1} can be transformed to
 	\begin{align}\label{sp2}
		\begin{cases}
 	y=-cx-\frac{d}{2}, \\[2pt]
	(a+c^2)x^2+cdx+\frac{d^2}{4}+c=0, \\[2pt]
	2cx^2+dx-1=0.                     
		\end{cases}
	\end{align}
Since $a+c^2>0$, the existence of a complex solution is equivalent to the vanishing of the resultant of the last two quadratic equations of \eqref{sp2}, which gives
$$4a^2+ad^4+12acd^2+24ac^2+4c^3d^2+36c^4=0.$$
Viewing it as a quadratic equation in $a$ and by completing powers we obtain
$$(8a+d^4+12cd^2+24c^2)^2=d^2(d^2+8c)^3.$$
Therefore, $d\ne 0$ and $d^2+8c\geqslant 0$. Since the discriminant of the third equation of \eqref{sp2} is $d^2+8c$, we get the existence of a real solution. Moreover, we see from \eqref{sp1} that any solution $(x_0, y_0, 1)$ satisfies $x_0y_0=-\frac{1}{2}$ and thus $x_0$ and $y_0$ are both nonzero.

\noindent\textit{Case 1: $a=c^2$.}

Substituting this into \eqref{sp2} gives $d^2=-8c$. There is a unique solution $\left(\frac{2}{d}, -\frac{d}{4}\right)$ to \eqref{sp2}. Hence there is a unique singular point $\left[\frac{2}{d} : -\frac{d}{4} : 1\right]=[2 : 2c : d]$ on $C$. On the other hand, $[4 : 4c : -d]\ne [2 : 2c : d]$ is another solution to $f=h_f=0$. Hence it is a flex of $C$.

\noindent\textit{Case 2: $a\ne c^2$.}

Solve \eqref{sp2} and we obtain the unique singularity $[x_0 : y_0 : z_0]=[cd^2+6c^2+2a: -(ad^2+6c^3+2ac): 2d(a-c^2)]$ of $C$, where $x_0 y_0 z_0\ne 0$. Put $g(x, y, z)=f(x+x_0 z, y+y_0 z, z_0 z)$ and $C': g=0$. Then $[0 : 0 : 1]$ is the unique singular point of $C'$. It suffices to show that $C'$ has a flex. Since $Jg (0, 0, 1) = 0$, the coefficients of $z^3, xz^2, yz^2$ in $g$ are all zero. Compare the coefficients and we get
	\begin{equation*}
		g(x, y, z)=3(ax_0 x^2+2y_0 xy+x_0 y^2)z+x(ax^2+3y^2).
	\end{equation*}
Next we will solve $g=h_g=0$. After eliminating $z$ we obtain
$$G_1(x, y)=3x_0y_0y^3+3ax_0^2xy^2-3ax_0y_0x^2y-a(ax_0^2+2y_0^2)x^3=0.$$
If there is a nonzero real solution of $G_1$ which is not a solution of
$$G_2(x, y)=ax_0 x^2+2y_0 xy+ x_0y^2=0,$$
solving $g=0$ will give the expected $z$.

Since $a(ax_0^2+2y_0^2)>0, ax_0\ne 0$, $G_1$ is always a real cubic and $G_2$ is always a real quadratic. Set $y=1$. $G_1=0$ always has a nonzero real solution and it remains to show that the resultant 
$$\mathrm{Res}(G_1(x, 1), G_2(x, 1))=16ax_0(ax_0^2+3y_0^2)(ax_0^2-y_0^2)^2\ne 0.$$
Otherwise, suppose that $y_0^2=ax_0^2$, then $J_f(x_0, y_0, z_0)=0$ implies that $-2cx_0y_0=cz_0^2=-y_0^2-ax_0^2=-2ax_0^2$. Consequently, $ax_0=cy_0$. It follows that $a^2x_0^2=c^2y_0^2=ac^2 x_0^2$ and $a=c^2$, so we arrive at a contradiction. Hence $\mathrm{Res}(G_1(x, 1), G_2(x, 1))\ne 0$ and the expected real solution does exist. Therefore, $C$ has a flex.

\end{itemize}
The theorem is now proved.
\end{proof}

\section*{Use of AI Tools Declaration}

The authors declare they have not used Artificial Intelligence (AI) tools in the creation of this article.

\section*{Acknowledgments}

H.-L. Huang was partially supported by the Key Program of Natural Science Foundation of Fujian Province (Grant No. 2024J02018) and the National Natural Science Foundation of China (Grant No. 12371037). Y. Ye was partially supported by the National Key R\&D Program of China (No. 2024YFA1013802), the National Natural Science Foundation of China (Nos. 12131015 and 12371042), and the Quantum Science and Technology-National Science and Technology Major Project (No. 2021ZD0302902).

\section*{Conflict of Interest}

The authors declare no conflicts of interest.


\begin{spacing}{.88}
\setlength{\bibsep}{2.pt}

\end{spacing}
\end{document}